\DeclareMathOperator{\End}{End}
\newcommand{\mb}{\mathbb}
\newcommand{\Sp}{\mbox{Spin}}
\newcommand{\Spc}{\mbox{Spin}^{\mb{C}}}
\newtheorem{theorem}{Theorem}
\newtheorem{definition}{Definition}
\newtheorem{lemma}{Lemma}
\newtheorem{remark}{Remark}
\begin{document}

% Use the \preprint command to place your local institutional report number 
% on the title page in preprint mode.
% Multiple \preprint commands are allowed.
%\preprint{}

\title{Immersion in $\mathbb{R}^n$ by Complex irreducible Spinors} %Title of paper

% repeat the \author .. \affiliation  etc. as needed
% \email, \thanks, \homepage, \altaffiliation all apply to the current author.
% Explanatory text should go in the []'s, 
% actual e-mail address or url should go in the {}'s for \email and \homepage.
% Please use the appropriate macro for the type of information

% \affiliation command applies to all authors since the last \affiliation command. 
% The \affiliation command should follow the other information.

\author{Rafael de Freitas Le\~ao}
\email[]{leao@ime.unicamp.br}
%\homepage[]{Your web page}
%\thanks{}
%\altaffiliation{}
\affiliation{Institute of Mathematics, Statistics and Scientific Computation\\
13083-859 Campinas, SP, Brazil}

\author{Samuel Augusto Wainer}
\email[]{wainer@ita.br}
%\homepage[]{Your web page}
%\thanks{}
%\altaffiliation{}
\affiliation{Technological Institute of Aeronautics \\
12228-900 Sao Jose dos Campos, SP, Brazil}

% Collaboration name, if desired (requires use of superscriptaddress option in \documentclass). 
% \noaffiliation is required (may also be used with the \author command).
%\collaboration{}
%\noaffiliation

\date{\today}

\begin{abstract}

The first time that the connection between isometric immersion of surfaces and solutions of the Dirac equation appeared in the literature was in the seminal paper of Thomas Friedrich in $1998$. In consequence of that, several authors contributed to this topic hereafter, by obtaining the spinorial representation of $Spin$ manifolds with arbitrary dimension and also by presenting a generalization of the Weierstrass representation map, for example. All these results assume that the manifolds and bundles involved carry a $Spin$ structure, however this hypothesis is somehow restrictive, as for instance, if we consider complex manifolds, it is more natural to consider $Spin^\mathbb{C}$ structures. There exist an alternative to adapt this result to $Spin^\mathbb{C}$ manifolds, where the idea is to use the left regular representation of a Clifford algebra in itself to build the spinor bundles, but unlike the original works of Friedrich and Morel, this representation is not irreducible. Thus, this paper aims to present the spinorial representation of $Spin^\mathbb{C}$ manifolds into Euclidian space with arbitrary dimensions using spinors that came from an irreducible representation of a complex Clifford algebra.

\end{abstract}

\pacs{}% insert suggested PACS numbers in braces on next line

\maketitle %\maketitle must follow title, authors, abstract and \pacs

% Body of paper goes here. Use proper sectioning commands. 
% References should be done using the \cite, \ref, and \label commands
\section{Introduction}
The Weierstrass map is a classic method to use complex functions on the construction of minimal surfaces in the euclidean 3 space. On the seminal paper \cite{friedrich1998}, Thomas Friedrich shows that the Weierstrass map has relation with spinors and the Dirac equation. His idea was to consider an immersion $M^{2}\hookrightarrow \mathbb{R}^{3} $ of an oriented surface $M^{2}$ and fix a parallel spinor $\Phi $ on $\mathbb{R}^{3}.$ By restriction, a $ Spin $ structure on $\mathbb{R}^{3}$
canonically induces a $ Spin $ structure into $M^{2}$. Restricting $\Phi $ to $M^2,$ Friedrich produces a specific spinorial field $\varphi,$ with constant norm and that is solution of homogeneous Dirac equation $D(\varphi)=H\varphi.$ On the other hand, given a solution $ \varphi $ from Dirac equation, with constant norm, there is a symmetrical endomorphism $E:T(M^{2})\rightarrow
T(M^{2})$ such that the spinorial field satisfies a \textquotedblleft twistor equation\textquotedblright \  $ \nabla _{X}^{M^{2}}\varphi =E(X)\cdot \varphi ,$ 
where $\nabla ^{M^{2}}$ is the induced connection in the spinor bundle of $M^{2}.$ Friedrich shows that a solution of this twistor equation is equivalent to Gauss and Codazzi equations of isometric immersions. As a consequence, the solution $ \varphi $ of the Dirac equation $ D(\varphi )=H\varphi ,~\left\vert \varphi \right\vert =const>0 $ 
produces an isometric immersion of $M^{2}$ in $\mathbb{R}^{3}.$

The main result demonstrated by Friedrich in his paper is

\begin{theorem}
	Let $(M^{2},g)$ a $ 2 $ -dimensional oriented Riemannian manifold and $%
	H:M^{2}\rightarrow \mathbb{R}$ a smooth map. Then the following statements are equivalent:
	
	\begin{itemize}
		\item[(a)] There is an isometric immersion $(\tilde{M}
		^{2},g)\rightarrow \mathbb{R}^{3}$ of the universal covering $\tilde{M}^{2}$ of $M^{2}$ in $\mathbb{R}^{3}$ with mean curvature $H$.
		
		\item[(b)] There is a solution $\varphi $, with constant norm $\left\vert \varphi \right\vert =1,$ of Dirac equation $D\varphi
		=H\varphi $.
		
		\item[(c)] There is a pair $(\varphi ,E)$ consisting of a symmetric endomorphism $E$ such that\\ $tr(E)=-H$ and a spinorial field $\varphi$ such that $\nabla _{X}^{M^{2}}\varphi =E(X)\cdot \varphi $.
	\end{itemize}
\end{theorem}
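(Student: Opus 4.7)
The plan is to establish the chain $(a)\Rightarrow(c)\Rightarrow(b)$ followed by the return chain $(b)\Rightarrow(c)\Rightarrow(a)$, using condition $(c)$ as the geometric pivot. For $(a)\Rightarrow(c)$, I would fix a parallel unit spinor $\Phi$ on $\R^{3}$ (which exists because $\R^{3}$ is flat), restrict the ambient Spin structure to $\tilde M^{2}$ via the immersion, and let $\varphi$ be the restriction of $\Phi$. The spinorial Gauss formula
\[
\nabla^{\R^{3}}_{X}\Phi \;=\; \nabla^{M^{2}}_{X}\varphi \;-\; \tfrac{1}{2}\,\nu\cdot W(X)\cdot\varphi,
\]
with $\nu$ the unit normal and $W$ the Weingarten map, combined with $\nabla^{\R^{3}}\Phi=0$, yields $\nabla^{M^{2}}_{X}\varphi = E(X)\cdot\varphi$ for $E$ built from $W$, and a sign check gives $\tr(E)=-H$.

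For $(c)\Rightarrow(b)$, I would apply $D=\sum_{i}e_{i}\cdot\nabla_{e_{i}}$ to both sides of the twistor equation and invoke the Clifford identity $\sum_{i}e_{i}\cdot E(e_{i}) = -\tr(E)$, whose diagonal contribution is $-\tr(E)$ (since $e_{i}\cdot e_{i}=-1$) and whose off-diagonal terms cancel by symmetry of $E$ against antisymmetry of $e_{i}\cdot e_{j}$. The constancy of $|\varphi|$ then follows from $X|\varphi|^{2} = 2\,\mathrm{Re}\langle E(X)\cdot\varphi,\varphi\rangle = 0$, using that Clifford multiplication by real tangent vectors is pointwise skew-Hermitian.

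The delicate direction is $(b)\Rightarrow(c)$. Constant norm forces $\nabla_{X}\varphi$ to be real-orthogonal to $\varphi$, but the map $X\mapsto X\cdot\varphi$ has only a two-dimensional real image inside the three-dimensional real-orthogonal complement of $\varphi$, so one cannot naively set $E(X)\cdot\varphi=\nabla_{X}\varphi$. The strategy is to introduce the \emph{energy--momentum tensor}
\[
T_{\varphi}(X,Y) \;=\; \tfrac{1}{2}\,\mathrm{Re}\,\langle X\cdot\nabla_{Y}\varphi + Y\cdot\nabla_{X}\varphi,\,\varphi\rangle,
\]
let $E$ be its associated symmetric endomorphism, and verify that the Dirac constraint $D\varphi=H\varphi$ together with the half-spinor splitting available in dimension two forces $\nabla_{X}\varphi - E(X)\cdot\varphi$ to vanish. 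This is the step I expect to be the main obstacle, since it is the only place where the low dimension and the Dirac equation must cooperate to close an a priori overdetermined system.

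Finally, $(c)\Rightarrow(a)$ reduces to Bonnet's fundamental theorem of surfaces. Differentiating $\nabla_{X}\varphi=E(X)\cdot\varphi$ once more and comparing with the spinorial curvature identity $R^{M^{2}}(X,Y)\varphi = \tfrac{1}{2}K\,e_{1}\cdot e_{2}\cdot\varphi$ in dimension two extracts the Gauss equation $K=\det E$ and the Codazzi equation $(\nabla_{X}E)Y = (\nabla_{Y}E)X$. These are the classical integrability conditions for a surface in $\R^{3}$, so on the simply connected universal cover $\tilde M^{2}$ they integrate to an isometric immersion with shape operator $E$ and therefore mean curvature $H$.
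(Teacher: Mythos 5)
The paper does not prove this theorem; it reproduces it from Friedrich's 1998 paper \cite{friedrich1998} as motivating background, so there is no in-paper proof to compare against. Your outline is nonetheless the standard one (it is essentially Friedrich's), and the cycle $(a)\Rightarrow(c)\Rightarrow(b)$, $(b)\Rightarrow(c)\Rightarrow(a)$ is the right organization: $(a)\Rightarrow(c)$ via restriction of a parallel ambient spinor and the spinorial Gauss formula, $(c)\Rightarrow(b)$ via Clifford-contracting the generalized Killing equation, and $(c)\Rightarrow(a)$ via Gauss--Codazzi and Bonnet's theorem on the universal cover are all correct as you describe them.

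The place you flag as ``the main obstacle,'' $(b)\Rightarrow(c)$, is in fact the only genuine gap in your write-up, and it is worth knowing that it closes by a direct computation rather than requiring any delicate argument. Since $|\varphi|=1$, at each point the four spinors $\{\varphi,\ e_{1}\cdot\varphi,\ e_{2}\cdot\varphi,\ \omega\cdot\varphi\}$ with $\omega=e_{1}e_{2}$ form a real orthogonal basis of the rank-$2$ complex spinor space (this is the point where dimension two enters: all three skew-Hermitian operators $e_{1},e_{2},\omega$ produce mutually real-orthogonal vectors of the same length as $\varphi$). Write
\begin{equation*}
\nabla_{X}\varphi \;=\; \alpha(X)\,\varphi + \beta_{1}(X)\,e_{1}\cdot\varphi + \beta_{2}(X)\,e_{2}\cdot\varphi + \gamma(X)\,\omega\cdot\varphi
\end{equation*}
with real $1$-forms $\alpha,\beta_{1},\beta_{2},\gamma$. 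Constancy of the norm kills $\alpha$. Contracting with $\sum_{i}e_{i}\cdot$ and using $e_{1}\omega=-e_{2}$, $e_{2}\omega=e_{1}$ gives
\begin{equation*}
D\varphi \;=\; -\big(\beta_{1}(e_{1})+\beta_{2}(e_{2})\big)\varphi \;+\; \gamma(e_{2})\,e_{1}\cdot\varphi \;-\; \gamma(e_{1})\,e_{2}\cdot\varphi \;+\; \big(\beta_{2}(e_{1})-\beta_{1}(e_{2})\big)\omega\cdot\varphi.
\end{equation*}
Equating this to $H\varphi$ with $H$ \emph{real} and comparing coefficients in the orthogonal basis forces $\gamma\equiv 0$, $\beta_{2}(e_{1})=\beta_{1}(e_{2})$, and $\beta_{1}(e_{1})+\beta_{2}(e_{2})=-H$. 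Setting $E(X)=\beta_{1}(X)e_{1}+\beta_{2}(X)e_{2}$, the middle relation is precisely the symmetry of $E$, the first kills the residual $\omega\cdot\varphi$-term you worried about, and the last is $\tr(E)=-H$. So the four real constraints in the complex Dirac equation are exactly the four real conditions needed, and the energy--momentum tensor you introduce recovers this same $E$; no separate argument about the half-spinor splitting is needed beyond the pointwise basis fact above. With that step filled in, your proposal is a complete and correct proof of Friedrich's theorem.
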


Since Friedrich's work numerous works appeared, \cite{morel05,lawn08,lawnroth10,lawnroth11,bayard13,bayard13b,bayard17,vaz19}, showing how Dirac equations, spinors, Gauss-Codazzi equations and isometric immersions are related on manifolds of low dimension. In recent years, Bayard, Lawn and Roth \cite{bayard17}, using the left regular represantation of the Clifford Algebra on itself, generalized the spinorial Weierstrass map to manifolds of arbitrary dimension.

More precisely, let $M$ a $ p $-dimensional Riemannian manifold, $E\rightarrow M$ a real vector bundle of rank $q$ with metric and compatible connection. It is assumed that $ TM $ and $ E $ are oriented and $ Spin $, with $ Spin $ structures given by $P_{Spin_{p}}(TM)\longrightarrow P_{SO_{p}}(TM)\text{ and }P_{Spin_{q}}(E)\longrightarrow P_{SO_{q}}(E),$ where $P_{SO_{p}}(TM)$ and $P_{SO_{q}}(E)$ are the positively oriented frame bundles of $TM$ and $E$. It is considered the following $Spin_{p}\times Spin_{q}$-principal bundle over $M$ $P_{Spin_{p}\times Spin_{q}}:=P_{Spin_{p}}(TM)\times _{M}P_{Spin_{q}}(E),$ and the following associated fiber bundle $\Sigma :=P_{Spin_{p}\times Spin_{q}}\times _{\rho }Cl_{n},$ $ U\Sigma :=P_{Spin_{p}\times Spin_{q}}\times _{\rho }Spin_{n}\subset \Sigma ,$
where $\rho $ is the left regular representation and $n=p+q.$ Noting that the product $\left\langle \left\langle \cdot ,\cdot \right\rangle \right\rangle
:Cl_{n}\times Cl_{n} \rightarrow Cl_{n}, \ \left\langle \left\langle \xi ,\xi^\prime \right\rangle \right\rangle=\tau (\xi ^{\prime })\xi$, where $Cl_{n}$ is the $n$-dimensional real Clifford algebra and $\tau$ is the reversion on the algebra, is $Spin_{n}$-invariante, the authors considered de induced $Cl_{n}$-valued product $\left\langle \left\langle \cdot ,\cdot \right\rangle \right\rangle :\Sigma
\times \Sigma  \rightarrow Cl_{n},$ given by the expression  $\left\langle \left\langle \varphi ,\psi \right\rangle \right\rangle = \tau([\varphi^\prime ]) [\varphi] . $
where $\varphi =[p,[\varphi ]]\in \Gamma (\Sigma ),$ $\varphi ^{\prime
}=[p,[\varphi ^{\prime }]]\in \Gamma (\Sigma )$ and $p\in \Gamma
(P_{Spin_{p}\times Spin_{q}})$ is a spinorial frame.

Using these structures the main resut of Bayard, Lawn and Roth \cite{bayard17} can be stated as

\begin{theorem}
 Let $ M $ be a simply connected Riemannian $ p $-dimensional manifold, $E\rightarrow M$ a real vector bundle of rank $q$ with metric and compatible connection, suppose that $TM$ and $E$ are oriented and $ spin. $ Let $B:TM\times TM\rightarrow E$ a bilinear and symmetric form. Then the following statements are equivalent:
	
	\begin{enumerate}
		\item There exists a section $\varphi \in \Gamma (U\Sigma )$ such that 
		$$\nabla ^{\Sigma }\varphi =-\frac{1}{2}\sum_{j=1}^{p}e_{j}\cdot
		B(X,e_{j})\cdot \varphi , \ \ \forall \ X\in \Gamma (TM). $$
		
		\item There exists an isometric immersion $F:M\rightarrow \mathbb{R}^{n}$
		with normal bundle $E$ and second fundamental form $B$.
		
		Besides that, $dF=\xi,$ where $\xi $ is the $\mathbb{R}^{n}$-valued $1$-form defined by $$\xi (X)=\left\langle \left\langle X\cdot \varphi ,\varphi \right\rangle
		\right\rangle \in \mathbb{R}^n \subset Cl_n ,  \ \ \forall X\in \Gamma (TM).$$
		This expression generalizes the classic Weierstrass representation formula.
	\end{enumerate}
\end{theorem}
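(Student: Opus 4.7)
The plan is to prove the equivalence by the standard two-way argument familiar from Friedrich and Morel, adapted to this Clifford-algebra-valued setting. The direction $(2) \Rightarrow (1)$ I would handle first as a warm-up: given the isometric immersion $F:M\rightarrow \mathbb{R}^n$ with normal bundle $E$, restrict a constant section of the trivial bundle $\mathbb{R}^n \times Cl_n \rightarrow \mathbb{R}^n$ (viewed as the spinor bundle for the left regular representation) to $M$ to produce $\varphi \in \Gamma(U\Sigma)$. Then apply the spinorial Gauss formula: the ambient spin connection decomposes as $\nabla^{\mathbb{R}^n}=\nabla^{\Sigma}+\tfrac{1}{2}\sum_j e_j \cdot B(X,e_j)\cdot$, so the flatness of the ambient spinor transfers directly to the generalized Killing equation of item (1).

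For the harder direction $(1) \Rightarrow (2)$, I would define the $Cl_n$-valued 1-form $\xi(X)=\langle\langle X\cdot \varphi, \varphi\rangle\rangle$ and proceed in three steps. First, I would verify that $\xi$ actually takes values in $\mathbb{R}^n\subset Cl_n$; this is an algebraic property coming from the reversion $\tau$ restricted to the image of $X\cdot\varphi \otimes \overline{\varphi}$, exploiting that $\varphi \in U\Sigma$ takes values in $Spin_n$ and that $X\in TM$ acts as a degree-one element. Second, I would compute $d\xi$ using the compatibility between $\nabla^{\Sigma}$ and $\langle\langle \cdot,\cdot\rangle\rangle$, plugging the Killing-type equation on both factors. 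The cross-terms split into a part symmetric in $(X,Y)$, which cancels against its transpose when antisymmetrized, and a part involving $B$, which vanishes because $B$ is symmetric and $E$-valued (so $B(X,e_j)\in E$ is orthogonal to $e_j\in TM$). The conclusion is $d\xi = 0$. Since $M$ is simply connected, the Poincaré lemma produces $F:M\rightarrow \mathbb{R}^n$ with $dF=\xi$.

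Finally, I would verify that $F$ is the desired isometric immersion. The isometry $|dF(X)|^2=|X|^2$ follows from a direct computation using $\varphi\in U\Sigma$ and the identity $X\cdot X = -|X|^2$ in $Cl_n$, together with the normalization $\langle\langle\varphi,\varphi\rangle\rangle=1$. The orthogonal decomposition $T\mathbb{R}^n|_M = dF(TM)\oplus E$ follows because the same formula $\langle\langle\nu\cdot\varphi,\varphi\rangle\rangle$ applied to $\nu\in E$ produces vectors orthogonal to the tangential image. To match the second fundamental form, I would differentiate $dF(X)=\xi(X)$ along $Y$ and project onto $E$, using the Killing equation to produce exactly $B(X,Y)$.

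The main obstacle I anticipate is the closure computation $d\xi=0$. The cancellation is delicate because the left regular representation is \emph{reducible}, so one cannot invoke the usual Hermitian inner product identities that trivialize things in the irreducible Friedrich--Morel setting; instead one must use the $Cl_n$-valued bilinear form $\langle\langle \cdot,\cdot\rangle\rangle$ together with the symmetry of $B$ and the orthogonality of $TM$ and $E$ inside $\mathbb{R}^n$. Identifying precisely which Clifford-algebraic identity forces the symmetric-in-$(X,Y)$ contribution to vanish under antisymmetrization is where the technical heart of the argument lies.
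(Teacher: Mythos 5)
The theorem you are proving is quoted in the paper as a prior result of Bayard, Lawn, and Roth \cite{bayard17}; the paper does not reprove it, though its own main result (Theorem \ref{principal}) is an irreducible-$\Spc$ analogue proved along exactly the lines you sketch. Your outline matches that architecture: restrict a parallel spinor and invoke the spinorial Gauss formula for $(2)\Rightarrow(1)$; for $(1)\Rightarrow(2)$ build $\xi$, show it is $\mathbb{R}^n$-valued (this follows at once because $[\varphi]\in\mathrm{Spin}_n$, so $\tau[\varphi][X][\varphi]$ is the adjoint action of $[\varphi]^{-1}$ on a vector), show $d\xi=0$, integrate by the Poincar\'e lemma, and verify isometry and the matching of $B$ with the second fundamental form.

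The one substantive gap is your account of why $d\xi = 0$. The mechanism is not that ``a part symmetric in $(X,Y)$ cancels against its transpose'' while ``a $B$-part vanishes because $B(X,e_j)\perp e_j$.'' What actually happens (cf.\ the paper's Lemma \ref{lemaxii}) is that $d\xi(X,Y)$ collapses to $(\mathrm{id}-\tau)\bigl(\tau[\varphi]\,\tau[C]\,[\varphi]\bigr)$ for a Clifford element $C$ built by antisymmetrizing the Killing data. Expanding $\sum_r X\cdot e_r\cdot B(Y,e_r)$ in a tangent frame, the $s=r$ terms produce $-B(Y,X)$, and after the $X\leftrightarrow Y$ antisymmetrization these cancel precisely because $B$ is symmetric. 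The surviving $s\neq r$ terms of $C$ are degree-three Clifford monomials $e_s\cdot e_r\cdot f$ in mutually orthogonal vectors, which are fixed by $\tau$; consequently $\tau[\varphi]\tau[C][\varphi]$ is a $\tau$-fixed point and is annihilated by $\mathrm{id}-\tau$. So symmetry of $B$ kills the diagonal, and $\tau$-invariance of the off-diagonal cubic (which is where orthogonality of $TM$ and $E$ enters, making $e_s,e_r,f$ pairwise orthogonal) kills the rest; there is no ``transpose cancellation'' step. You flagged this closure computation as the technical heart of the argument, and you were right --- it needs to be carried out along the above lines for the proof to close.
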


The assumption that the manifold carries a $\Sp$-structure is somewhat restrictive, for example, in the particular case of complex manifolds, is more natural to consider $\Spc$-structures. In the recent work of Leao and Wainer \cite{leaowainer2018}, it was demonstrated that the above solution can also be refined to $\Spc$-structures.

Looking for the development of this problem we can note that the initial works on low dimensions \cite{friedrich1998,morel05} usualy utilizes irreducible representations of the Clifford Algebras. On the other hand the generalizations \cite{bayard17,leaowainer2018} need an algebraic propertie of the left regular representation which is not an irreducible one. This difference with the fact that the Clifford Algebras are semi-simple, wich implies that every representation is completly reducible, raises the question if we can describe the Weierstrass map on higher dimensions using irreducible spinors.

In the present work we proved that it is possible to consider irreducible representations as long as we consider more then one solution of the Dirac equation. To show this, in the section \ref{Algebric Preliminaries} we present some algebric preliminaries on Clifford algebras ideals, $Spin$ and $Spin^{\mathbb{C}}$ groups; in the section \ref{512} we adapt the idea presented in \cite{bar98} to  $Spin^{\mathbb{C}}$ submanifolds relating the connections on the adapted $Spin^{\mathbb{C}}$ structures (\ref{relatingconnections}); in section \ref{idealspinors} we fix our notation, in order to take advantage of the multiplicative structure in the Clifford algebra and at the same time keep the $Spin$ representation comming from an irreducible representation, we build the spinor bundles from complex Clifford algebras ideals; in section \ref{hermitianproduct1} we define a $\mathbb{C}$-valued hermitian product using the Clifford algebra structure of the $\Spc$-Clifford bundle; finally in \ref{principalsection} we present our result Theorem \ref{principal} that gives a spinorial representation of $Spin^{\mathbb{C}}$ submanifolds in $\mathbb{R}^{n}$ using irreducible complex Clifford algebra spinors.

\section{Algebric Preliminaries} \label{Algebric Preliminaries}
Here we denote by $Cl_n$ the real Clifford Algebra on $\mathbb{R}^n$ and by $\mathbb{C}l_{n}=\mathbb{C}\otimes Cl_{n}$ its complexification. For each $n$, let $\nu _{n}$ the number of non-equivalent irreducible real representations of $Cl_{n}$ and denote by $\nu _{n}^{\mathbb{C}}$ the number of non-equivalent irreducible complex representations of $\mathbb{C}l_{n}$%
. Let $d_{n}=\dim _{\mathbb{R}}W$ where $W$ is an irreducible $\mathbb{R}$-module of $Cl_{n}$. Analogously be $d_{n}^{\mathbb{C}%
}=\dim _{\mathbb{R}}S$ where $S$ is an irreducible $\mathbb{C}$-module of $Cl_{n}$ and therefore also of $\mathbb{C}l_{n}$.

The following classification is well known \cite{lawson}:

\begin{theorem} 
	For $1\leq n\leq 8$ the values of $\nu _{n},\nu _{n}^{\mathbb{C}%
	},d_{n},d_{n}^{\mathbb{C}}$ are given by Table I.
	
	\begin{table}[h!] \label{classification}
		\begin{center}
			\begin{tabular}{|c||c|c|c||c|c|c|}
				\hline
				$n$ & $Cl_{n}$ & $\nu _{n}$ & $d_{n}$ & $\mathbb{C}l_{n}$ & $\nu _{n}^{%
					\mathbb{C}}$ & $d_{n}^{\mathbb{C}}$ \\ \hline\hline
				$1$ & $\mathbb{C}$ & $1$ & $2$ & $\mathbb{C\oplus C}$ & $2$ & $1$ \\ \hline
				$2$ & $\mathbb{H}$ & $1$ & $4$ & $\mathbb{C}(2)$ & $1$ & $2$ \\ \hline
				$3$ & $\mathbb{H}\oplus \mathbb{H}$ & $2$ & $4$ & $\mathbb{C}(2)\oplus 
				\mathbb{C}(2)$ & $2$ & $2$ \\ \hline
				$4$ & $\mathbb{C}(2)$ & $1$ & $8$ & $\mathbb{C}(4)$ & $1$ & $4$ \\ \hline
				$5$ & $\mathbb{C}(4)$ & $1$ & $8$ & $\mathbb{C}(4)\oplus \mathbb{C}(4)$ & $2$
				& $4$ \\ \hline
				$6$ & $\mathbb{R}(8)$ & $1$ & $8$ & $\mathbb{C}(8)$ & $1$ & $8$ \\ \hline
				$7$ & $\mathbb{R}(8)\oplus \mathbb{R}(8)$ & $2$ & $8$ & $\mathbb{C}(8)\oplus 
				\mathbb{C}(8)$ & $2$ & $8$ \\ \hline
				$8$ & $\mathbb{R}(16)$ & $1$ & $16$ & $\mathbb{C}(16)$ & $1$ & $16$ \\ 
				\hline
			\end{tabular}
			\caption{Values of $\nu _{n},\nu _{n}^{\mathbb{C}},d_{n},d_{n}^{\mathbb{C}%
				}.$}\label{tabela3}
		\end{center}
	\end{table}
	For $n>8$ it can be calculated using ($m,k\geq 1
	$):%
	\begin{eqnarray*}
	& Cl_{n+8} \simeq  Cl_n\otimes_{\mathbb{R}} \mathbb{R}(16), \ \ \mathbb{C}l_{n+2} \simeq \mathbb{C}l_n\otimes_{\mathbb{C}} \mathbb{C}(2),& \\
	&	\nu _{m+8k} =\nu _{m},\ \  \nu _{m+2k}^{\mathbb{C}}=\nu _{m}^{\mathbb{C}}, \ \ d_{m+8k} =2^{4k}d_{m}, \ \ d_{m+2k}^{\mathbb{C}}=2^{k}d_{m}^{\mathbb{C}}.&
	\end{eqnarray*}
\end{theorem}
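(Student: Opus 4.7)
The plan is to establish Table I for $n \leq 8$ by a direct computation in low dimensions combined with iterated application of the periodicity isomorphisms, and then to propagate the values to all $n$ via the stated $8$-periodicity (real) and $2$-periodicity (complex) recurrences.

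First I would verify $n = 1$ and $n = 2$ by hand: the relation $e_1^2 = -1$ identifies $Cl_1$ with $\mathbb{C}$, while $Cl_2$, with basis $\{1, e_1, e_2, e_1 e_2\}$ and relations $e_1^2 = e_2^2 = -1$, $e_1 e_2 = -e_2 e_1$, maps isomorphically to $\mathbb{H}$ by sending $e_1, e_2$ to two of the standard quaternion units. Complexifying yields $\mathbb{C}l_1 \simeq \mathbb{C} \otimes_{\mathbb{R}} \mathbb{C} \simeq \mathbb{C} \oplus \mathbb{C}$ and $\mathbb{C}l_2 \simeq \mathbb{C} \otimes_{\mathbb{R}} \mathbb{H} \simeq \mathbb{C}(2)$. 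The next step is the fundamental real periodicity, which expresses $Cl_{n+2}$ as a tensor product of a Clifford algebra on $\mathbb{R}^n$ (with appropriately adjusted signature) with $Cl_2$, together with the cleaner complex periodicity $\mathbb{C}l_{n+2} \simeq \mathbb{C}l_n \otimes_{\mathbb{C}} \mathbb{C}(2)$. Both are proved by sending the two new generators to $1 \otimes e'_1$ and $1 \otimes e'_2$ and the old generators $e_j$ to $e_j \otimes e'_1 e'_2$, verifying the Clifford relations on the nose, and matching real dimensions on both sides. Iterating from $n = 1, 2$ fills in rows $3$ through $8$ of the table, and in particular one finds $Cl_8 \simeq \mathbb{R}(16)$, which gives the $8$-periodicity generator $Cl_{n+8} \simeq Cl_n \otimes_{\mathbb{R}} \mathbb{R}(16)$.

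To read off $\nu_n$ and $d_n$, I would invoke Wedderburn--Artin: a simple matrix algebra $\mathbb{F}(m)$ for $\mathbb{F} \in \{\mathbb{R}, \mathbb{C}, \mathbb{H}\}$ has a unique irreducible left module $\mathbb{F}^m$ of real dimension $m \cdot \dim_{\mathbb{R}} \mathbb{F}$, while a direct sum $\mathbb{F}(m) \oplus \mathbb{F}(m)$ has exactly two non-equivalent irreducibles of that same real dimension, distinguished by the two central idempotents. Applying this to each entry in the Clifford column yields the values of $\nu_n$ and $d_n$; the same argument over $\mathbb{C}$ gives $\nu_n^{\mathbb{C}}$ and $d_n^{\mathbb{C}}$. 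The stated recurrences $\nu_{m+8k} = \nu_m$ and $d_{m+8k} = 2^{4k} d_m$ are then immediate, since tensoring with $\mathbb{R}(16)$ preserves the number of simple components and multiplies each irreducible's real dimension by $16$; the complex case is analogous, with the factor $2$ per step.

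The main obstacle in a careful write-up is the proof of the two periodicity isomorphisms, especially the signature bookkeeping in the real case: the elements $e_j \otimes e'_1 e'_2$ square to $+1$ in the tensor product, so to recover generators squaring to $-1$ one is forced to start from a Clifford algebra of opposite signature on the $n$-dimensional factor. This is why a single step of the real recursion does not return to the same Clifford algebra family and only the $8$-fold iterate yields the clean form stated in the theorem. Once this subtlety is sorted out and the base cases $n = 1, 2$ are in hand, the remainder is a mechanical tabulation.
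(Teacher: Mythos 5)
The paper does not prove this theorem; it states it as well known and cites Lawson--Michelsohn's \emph{Spin Geometry}. Your outline is a correct summary of the standard argument from that reference: base cases $Cl_1 \simeq \mathbb{C}$, $Cl_2 \simeq \mathbb{H}$ by hand, the twofold periodicity isomorphism (with the signature flip you correctly flag --- a single step lands in the Clifford algebra of opposite signature, and only the four-fold iterate returns to the same family, producing $Cl_{n+8} \simeq Cl_n \otimes \mathbb{R}(16)$), the complex periodicity $\mathbb{C}l_{n+2} \simeq \mathbb{C}l_n \otimes_{\mathbb{C}} \mathbb{C}(2)$ which has no such bookkeeping because $i$ absorbs the sign, and Wedderburn--Artin to read off $\nu$ and $d$ from the matrix-algebra form. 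Nothing is missing from the plan, and the values of $\nu$, $d$ follow mechanically once the algebra is identified.

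One incidental remark worth making precise: carrying out your computation for $n=4$ gives $Cl_4 \simeq \mathbb{H}(2)$, not the $\mathbb{C}(2)$ printed in the paper's table (the latter has real dimension $8$, whereas $\dim_{\mathbb{R}} Cl_4 = 16$). The printed value $d_4 = 8 = \dim_{\mathbb{R}} \mathbb{H}^2$ is consistent with $\mathbb{H}(2)$, so this is a typographical slip in the table rather than an error that propagates; your proof would have detected it. Also, your closing sentence elides one small step: knowing $Cl_8 \simeq \mathbb{R}(16)$ does not by itself yield $Cl_{n+8} \simeq Cl_n \otimes Cl_8$ --- that isomorphism is obtained by iterating the two-step recursion four times and observing that the signature reversals cancel, which is precisely the point you raise in the last paragraph, so the logic is sound but the phrase ``which gives the $8$-periodicity generator'' should be read as shorthand for that iteration rather than as a one-line consequence.
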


\subsection{The $Spin^{\mathbb{C}}$ group}

\begin{definition} 
	The $Spin_n^{\mathbb{C}}$ group is defined by
	\begin{equation*}
	Spin_n^{\mathbb{C}} = \frac{Spin_n \times S^1}{ \{ (-1,-1) \} }, 
	\end{equation*}
	where $S^1=U(1) \subset \mathbb{C}$ denotes the complex unitary group.
\end{definition}

This is better understood within the complex Clifford algebra $\mathbb{C}l_n = \mathbb{C} \otimes Cl_n $. First note that $Spin_n$ and $S^1$ are subgroups of the group of invertible elements in $ \mathbb{C}l_n$ and $Spin_n \cap S^1 = \{ 1, -1 \}.$
Therefore the elements in $Spin_n^{\mathbb{C}}$ are equivalenceses classes in $Spin_n \times S^1$ by the relation $(p,s)\cong (-p,-s)$, obtained from $-1 \in \mathbb{C}l_n$. Then we have the following 
\begin{eqnarray*}
 Spin_n^{\mathbb{C}} &\hookrightarrow& Cl_n \otimes \mathbb{C} = \mathbb{C}l_n, \\
 \left[ P,s \right] &\mapsto& p  \otimes s.
\end{eqnarray*}

For the group $Spin_{n}^{\mathbb{C}}$ in this work we will define the following homomorphisms:
\begin{eqnarray*}
&\lambda_n:& Spin_n \rightarrow SO_n \text{ the double cover } \lambda_{n}(u)(v):=uvu^{-1}\\
&\lambda_n^{\mathbb{C}}:&Spin_{n}^{\mathbb{C}}\rightarrow SO_{n}, \text{defined by} \ \lambda_n^{\mathbb{C}}([p,s])=\lambda_n(p).\\
&i_n^{\mathbb{C}}:& Spin_{n} \rightarrow Spin_{n}^{\mathbb{C}} \text{ the natural inclusion, } i_n^{\mathbb{C}}(p)=[p,1]. \\
&j_n^{\mathbb{C}}:& S^{1}\rightarrow Spin_{n}^{\mathbb{C}} \text{ the natural inclusion, } j_n^{\mathbb{C}}(s)=[1,s].\\
&l_n^{\mathbb{C}}:& Spin_{n}^{\mathbb{C}}\rightarrow S^{1} \text{ defined by }	l_n^{\mathbb{C}}([p,s])=s^{2}. \\
&p_n^{\mathbb{C}}=&\lambda_n^{\mathbb{C}}\times l_n^{\mathbb{C}	}: Spin_{n}^{\mathbb{C}}\rightarrow SO_{n}\times S^{1} \text{ defined by } l_n^{\mathbb{C}}([p,s])=(\lambda_n(p),s^{2}).
\end{eqnarray*}

Then we have the following exact sequence for the $Spin_n^{\mathbb{C}}$ group:
\begin{equation*}
1 \longrightarrow \mathbb{Z}_2 \longrightarrow Spin_n^{\mathbb{C}} \overset{p_n^{\mathbb{C}}}{\longrightarrow} SO_n \times S^1 \longrightarrow 1.
\end{equation*}

\subsection{Spin representation}

Consider an irreducible representation of $\mathbb{C}l_{n}:=Cl_{n}\otimes \mathbb{C}$ 
\begin{equation*}
\bar{\rho}_{n}^{\mathbb{C}}:\mathbb{C}l_{n}\rightarrow \End_{\mathbb{C}}(S). 
\end{equation*}

Where $S$ is a $\mathbb{C}$-vectorial space such that $dim_{\mathbb{C}} S = 2^{\frac{n}{2}}$ if $n$ is even and $dim_{\mathbb{C}} S = 2^{\frac{n-1}{2}}$ if $n$ is odd.

Define $\rho _{n}^{\mathbb{C}}$ the complex $Spin$ representation as the restriction of $\bar{\rho}_{n}^{\mathbb{C}}$ given by the inclusions
\begin{eqnarray*} \label{phoo}
	Spin_{n} &\subset &Cl_{n}^{0}\subset Cl_{n}\subset \mathbb{C}l_{n}, \nonumber \\
	\rho _{n}^{\mathbb{C}} &:&=\left. \bar{\rho}_{n}^{\mathbb{C}}\right\vert
	_{Spin_{n}}:Spin_{n}\rightarrow \End_{\mathbb{C}}(S).
\end{eqnarray*}
We say that $S$ carries the $Spin_{n}$ representation.

	When $n$ is odd, the definition (\ref{phoo}) of $\rho _{n}^{\mathbb{C}%
	}$ is independent of which irreducible representation of $\mathbb{C}l_{n}$. Furthermore, when $n$ is odd, $\rho _{n}^{\mathbb{C}}$ is irreducible.
	
	When $n$ is even, there exist a decomposition
	\begin{equation*}
	\rho _{n}^{\mathbb{C}}=\left( \rho _{n}^{\mathbb{C}}\right) ^{+}\oplus
	\left( \rho _{n}^{\mathbb{C}}\right) ^{-}
	\end{equation*}
	as the sum of two non-equivalent irreducible complex representations of $Spin_{n}$.

In order to simplify the notation, where there is no likelihood of confusion, we simply denote $\bar{\rho}_{n}^{\mathbb{C}}= \rho_{n}^{\mathbb{C}} = \rho_{n}.$

\subsection{Complex Spin representation given by Clifford algebra ideals}  \label{sec15} \label{representacaoideal}

\subsubsection{Case $n$ even}

For the case $n=2k$ even, by the classification table \ref{classification},
there are isomorphisms
\begin{equation*}
\mathbf{i}_{2k}:\mathbb{C}l_{2k}\rightarrow \mathbb{C}(2^{k}).
\end{equation*}

Consider elements \textrm{f}$_{i}\in \mathbb{C}l_{2k},i=1,\cdots ,2^{k}$
such that
\begin{equation*}
\mathbf{i}_{2k}(\mathrm{f}_{i})=\overset{%
	\begin{array}{ccccc}
	&  & i \text{-th column} &  & 
	\end{array}%
}{\left( 
	\begin{array}{ccccc}
	0 & \cdots  & 0 & \cdots  & 0 \\ 
	\vdots  & \ddots  &  &  & \vdots  \\ 
	0 &  & 1 &  & 0 \\ 
	\vdots  &  &  & \ddots  & \vdots  \\ 
	0 & \cdots  & 0 & \cdots  & 0%
	\end{array}%
	\right) i\text{-th line},}.
\end{equation*}

The elements $\mathrm{f}_{i}, i=1, \cdots ,2^k$ are a complete set of primitive orthogonal idempotents:
\begin{eqnarray*}
& \mathrm{f}_{i}^{2}=\mathrm{f}_{i},\forall i, \mathrm{f}_{i}\mathrm{f}_{j}=0,i\neq j&,\\
& 1=f_1+ \cdots f_{2^k}.&
\end{eqnarray*}

Considering isomorphism with matrix algebra
it is obvious that the subsets
\begin{equation*}
I_{i}=\mathbb{C}l_{2k}\mathrm{f}_{i}=\{a\mathrm{f}_{i}:a\in \mathbb{C}%
l_{2k}\},i=1,\cdots ,2^{k},
\end{equation*}%
are left minimal ideals $\mathbb{C}l_{2k}$ with $\dim _{\mathbb{C}}I_{i}=2^{k},$ $I_{i}\cap I_{j}=\varnothing ,i\neq j,$
\begin{equation*}
\mathbf{i}_{2k}(I_{i})=\overset{%
	\begin{array}{ccccc}
	&  & i \text{-th} &  & 
	\end{array}%
}{\left( 
	\begin{array}{ccccc}
	0 & \cdots  & a_{1} & \cdots  & 0 \\ 
	\vdots  &  & \vdots  &  & \vdots  \\ 
	0 & \cdots  & a_{2^{k}} & \cdots  & 0%
	\end{array}%
	\right) }\text{ where }a_{j}\in \mathbb{C},j=1,\cdots ,2^{k},
\end{equation*}%
and it is also clear that $\mathrm{f}_{i}I_{i}\mathrm{f}_{i}\simeq \mathbb{%
	C}\mathrm{f}_{i}\simeq \mathbb{C}$.
	
From $1=\mathrm{f}_{1}+\cdots +\mathrm{f}_{2^{k}}$ the algebra $\mathbb{C}l_{2k}$ decomposes as the sum of minimal left ideals
\begin{equation*}
\mathbb{C}l_{2k}=\mathbb{C}l_{2k}\mathrm{f}_{1}\oplus \cdots \oplus \mathbb{C%
}l_{2k}\mathrm{f}_{2^{k}}=I_{1}\oplus \cdots \oplus I_{2^{k}}.
\end{equation*}

Considering the representation by left multiplication on the ideals $I_i$ 
\begin{eqnarray*}
\rho _{i} :\mathbb{C}l_{2k} \rightarrow \End_{\mathbb{C}}(I_{i})&,&i=1,\cdots
,2^{k}; \\
a \mapsto \rho _{i}(a):I_{i}&\rightarrow& I_{i}  \notag \\
\rho _{i}(a)\nu  &=&a\nu,   \notag
\end{eqnarray*}%
from the classification (table \ref{classification}) and the representation theory of matrix algebras follows immediately that the $\rho _{i}$'s are irreduticibles and equivalents.

\subsubsection{Case $n$ odd}

for the case $n=2k+1$ odd, by the classification table \ref{classification},
there are isomorphisms
\begin{equation*}
\mathbf{i}_{2k+1}:\mathbb{C}l_{2k+1}\rightarrow \mathbb{C}(2^{k})\oplus \mathbb{C}%
(2^{k}).
\end{equation*}

Consider the elements \textrm{f}$_{i;1},\mathrm{f}_{i;2}\in \mathbb{C}
l_{2k},i=1,\cdots ,2^{k}$ such that
\begin{eqnarray*}
\mathbf{i}_{2k+1}(\mathrm{f}_{i;1}) &=&%
i \text{-th}
\overset{%
	\begin{array}{ccccc}
	&  & i \text{-th} &  & 
	\end{array}%
}{\left( 
	\begin{array}{ccccc}
	0 & \cdots  & 0 & \cdots  & 0 \\ 
	\vdots  & \ddots  &  &  & \vdots  \\ 
	0 &  & 1 &  & 0 \\ 
	\vdots  &  &  & \ddots  & \vdots  \\ 
	0 & \cdots  & 0 & \cdots  & 0%
	\end{array}%
	\right) }\oplus \left( 
\begin{array}{ccccc}
0 & \cdots  & 0 & \cdots  & 0 \\ 
\vdots  & \ddots  &  &  & \vdots  \\ 
0 &  & 0 &  & 0 \\ 
\vdots  &  &  & \ddots  & \vdots  \\ 
0 & \cdots  & 0 & \cdots  & 0%
\end{array}%
\right) ,  \notag \\
\mathbf{i}_{2k+1}(\mathrm{f}_{i;2}) &=&\left( 
\begin{array}{ccccc}
0 & \cdots  & 0 & \cdots  & 0 \\ 
\vdots  & \ddots  &  &  & \vdots  \\ 
0 &  & 0 &  & 0 \\ 
\vdots  &  &  & \ddots  & \vdots  \\ 
0 & \cdots  & 0 & \cdots  & 0%
\end{array}%
\right) \oplus \overset{%
	\begin{array}{ccccc}
	&  & i \text{-th} &  & 
	\end{array}%
}{\left( 
	\begin{array}{ccccc}
	0 & \cdots  & 0 & \cdots  & 0 \\ 
	\vdots  & \ddots  &  &  & \vdots  \\ 
	0 &  & 1 &  & 0 \\ 
	\vdots  &  &  & \ddots  & \vdots  \\ 
	0 & \cdots  & 0 & \cdots  & 0%
	\end{array}%
	\right) }%
i \text{-th}
.
\end{eqnarray*}

The elements $\mathrm{f}_{i;\lambda},i=1,\cdots ,2^{k};\lambda=0,1$ are a complete set of primitive orthogonal idempotents: 
\begin{eqnarray*}
&\mathrm{f}_{i;\lambda}^{2} =\mathrm{f}_{i;\lambda},\forall i,l, \ \ \mathrm{f}_{i;1}\mathrm{f}_{j;2} =0,\forall i,j, \ \ \mathrm{f}_{i;\lambda}\mathrm{f}_{j;\lambda} =0,i\neq j,\lambda=0,1,&\\
&1=f_{1;1}+ \cdots f_{2^{k};1} + f_{1;2}+ \cdots f_{2^{k};2}&
\end{eqnarray*}

Considering the isomorphism with matrix algebra it is obvious that the subsets
\begin{equation*}
I_{i;\lambda}=\mathbb{C}l_{2k+1}\mathrm{f}_{i;\lambda}=\{a\mathrm{f}_{i;\lambda}:a\in \mathbb{C%
}l_{2k+1}\},i=1,\cdots ,2^{k};\lambda=0,1
\end{equation*}%
are left minimal ideals $\mathbb{C}l_{2k+1}$ with $\dim _{%
	\mathbb{C}}I_{i;\lambda}=2^{k},$  $I_{i;1}\cap I_{j;2}=\varnothing, \forall
i,j,$ with $~I_{i;1}\cap I_{j;2}=\varnothing , i\neq j,\lambda=0,1,$

 \begin{eqnarray*}
 \mathbf{i}_{2k+1}(I_{i;1}) &=&\left\{ \overset{%
	\begin{array}{ccccc}
	&  & i \text{-th} &  & 
	\end{array}%
}{\left( 
	\begin{array}{ccccc}
	0 & \cdots  & a_{1} & \cdots  & 0 \\ 
	\vdots  &  & \vdots  &  & \vdots  \\ 
	0 & \cdots  & a_{2^{k}} & \cdots  & 0%
	\end{array}%
	\right) }\oplus \left( 
\begin{array}{ccccc}
0 & \cdots  & 0 & \cdots  & 0 \\ 
\vdots  &  & \vdots  &  & \vdots  \\ 
0 & \cdots  & 0 & \cdots  & 0%
\end{array}%
\right)
\left. 
\right.
\right\} , \ \ \ \notag
\\
\mathbf{i}_{2k+1}(I_{i;2}) &=&\left\{ \left( 
\begin{array}{ccccc}
0 & \cdots  & 0 & \cdots  & 0 \\ 
\vdots  &  & \vdots  &  & \vdots  \\ 
0 & \cdots  & 0 & \cdots  & 0%
\end{array}%
\right) \oplus \overset{%
	\begin{array}{ccccc}
	&  & i \text{-th} &  & 
	\end{array}%
}{\left( 
	\begin{array}{ccccc}
	0 & \cdots  & a_{1} & \cdots  & 0 \\ 
	\vdots  &  & \vdots  &  & \vdots  \\ 
	0 & \cdots  & a_{2^{k}} & \cdots  & 0%
	\end{array}%
	\right) }
\left. 
\right. 
\right\}, \ \ \
\end{eqnarray*}
where $a_{j}\in \mathbb{C}, j=1,\cdots ,2^{k}.$ It is also clear that $\mathrm{f}_{i;\lambda}I_{i}\mathrm{f}_{i;\lambda}\simeq 
\mathbb{C}\mathrm{f}_{i;\lambda}\simeq \mathbb{C}$. 

From
\begin{equation*}
 1=\left(\mathrm{f}_{1;1}+\cdots+\mathrm{f}_{2^{k};1} \right) + \left( \mathrm{f}_{1;2}+\cdots +\mathrm{f}_{2^{k};2} \right),
\end{equation*} 
the algebra $\mathbb{C}l_{2k+1}$ decomposes as the sum of minimal left ideals
\begin{eqnarray*}
\mathbb{C}l_{2k+1} &=& \left( \mathbb{C}l_{2k+1}\mathrm{f}_{1;1}\oplus \cdots \oplus \mathbb{C}l_{2k+1}%
\mathrm{f}_{2^{k};1}\right) \oplus \left( \mathbb{C}l_{2k+1}\mathrm{f}_{1;2}\oplus \cdots
\oplus \mathbb{C}l_{2k+1}\mathrm{f}_{2^{k};2} \right)  \notag \\
&=&\left( I_{1;1}\oplus \cdots \oplus I_{2^{k};1}\right) \oplus \left( I_{1;2}\oplus \cdots \oplus
I_{2^{k};2} \right)
\end{eqnarray*}

Considering the representation by left multiplication on the ideals $I_{i;\lambda}$
\begin{eqnarray*}
\rho _{i;\lambda} :\mathbb{C}l_{2k} &\rightarrow& \End_{\mathbb{C}%
}(I_{i;\lambda}),i=1,\cdots ,2^{k},\lambda=0,1;  \notag \\
a &\mapsto &\rho _{i;\lambda}(a):I_{i;\lambda}\rightarrow I_{i;\lambda}  \notag \\
&& \ \ \ \ \ \ \rho _{i;\lambda}(a)\nu  =a\nu 
\end{eqnarray*}%
from the classification (table \ref{classification}) and the representation theory of matrix algebras follows immediately that there are two irreducible representations classes and
\begin{equation*}
\rho _{1;1} \simeq \cdots \simeq \rho _{2^{k};1}, \ \ \ \
\rho _{1;2} \simeq \cdots \simeq \rho _{2^{k};2}.
\end{equation*}

\section{Adapted $Spin^{\mathbb{C}}$ Structure} \label{512}

In this work whenever we use the noun submanifold we are referring to the concept of immersed submanifold. Remember that a manifold $M$ is immersed submanifold of $N$ if there is an injective smooth map $F:M \rightarrow N$ with injective derivative $dF:TM \rightarrow TN$ (\cite{warner} 22).

It will be relevant for our study to consider the following inclusion $n=p+q$:
\begin{eqnarray*}
	SO(p)\times SO(q) &\subset &SO(p+q) \notag \\
	(\lambda _{p}(u),\lambda _{q}(w)) &:&\mathbb{R}^{p}\times \mathbb{R}
	^{q}\rightarrow \mathbb{R}^{p}\times \mathbb{R}^{q} \notag \\
	(\lambda _{p}(u),\lambda _{q}(w))(v_{1},v_{2}) &=&(\lambda_{p}(u)v_{1},\lambda _{q}(w)v_{2}).
\end{eqnarray*}

\begin{definition}
	We define the adapted $ Spin $ group as
	\begin{equation*}
	\mathcal{S}_n:=\{uv;u\in Spin_p,v\in Spin_q\}\subset Spin_n 
	\end{equation*}
	and note that $\mathcal{S}_n=\lambda _{n}^{-1}\left( SO(p)\times SO(q)\right)
	$ and $\mathcal{S}_n \simeq \frac{Spin(p) \times Spin(q)}{(1,1),(-1,-1)}.$
\end{definition}

Note that $\left. \lambda _{n}\right\vert _{\mathcal{S}_n}:\mathcal{S}\rightarrow
SO(p)\times SO(q)$ is a double cover map
\begin{eqnarray*}
	\left. \lambda _{n}\right\vert _{\mathcal{S}_n}(uw)(v_{1},v_{2}) &=&\lambda
	_{n}(uw)(v_{1}+v_{2})= \notag \\
	uw(v_{1}+v_{2})w^{-1}u^{-1}
	&=&uv_{1}u^{-1}+wv_{2}w^{-1}=(uv_{1}u^{-1},wv_{2}w^{-1}),
\end{eqnarray*}
identify $(v_1,v_2) \in \mathbb{R}^p \times \mathbb{R}^q$ with $v_1 + v_2 \in \mathbb{R}^{n}$.

Here we adapt the idea presented in \cite{bar98} to $Spin^{\mathbb{C}}$ submanifolds. Let $Q$ an $n$-dimensional riemannian $Spin^{\mathbb{C}}$ manifold and $M\hookrightarrow Q$ an $p$-dimensional $Spin^{\mathbb{C}}$ submanifold. Put in $M$ the induced metric from $Q$. Consider $P_{SO(n)}$ the bundle of positively oriented frames of $Q$ and $P_{SO(p)}$ the bundle of positively oriented frames of $M.$ $P_{S^{1}}$ is the $S^{1}$-principal bundle associated with the $Spin^{\mathbb{C}}$ structure of $Q$
and $P_{S^{1}}^{1}$ is the $S^{1}$-principal bundle associated with the $Spin^{\mathbb{C}}$ of $M$.

Denote by $\left. P_{SO(n)}\right\vert _{M}$ the frame bundle $Q $ restricted to $M$ with structure group $SO(p)\times SO(q).$ The same with $\left. P_{S^{1}}\right\vert _{M}$ with structure group $S^{1}.$

Let $e_{1},...,e_{p}$ a local positively oriented base of tangent bundle of $M$, and $f_{1},...,f_{q}$ a local positively oriented base of normal bundle $E$. Fix $$h=h_{1}\oplus h_{2}=(e_{1},...,e_{p},f_{1},...,f_{q}):M\rightarrow \left. P_{SO(n)}\right\vert _{M}$$ a local section of frame bundle of $Q$ restricted $M$ and $l:M\rightarrow \left. P_{S^{1}}\right\vert
_{M} $ a local section of $S^{1}$-principal bundle restricted to $M$. 

Let
\begin{equation*}
\Lambda ^{\mathbb{C}Q}:P_{Spin_{n}^{\mathbb{C}}}\rightarrow
P_{SO(n)}\times P_{S^{1}}, 
\end{equation*}
the $Spin^{\mathbb{C}}$ structure of $Q$ and
\begin{equation*}
\Lambda^{1\mathbb{C}}:P_{Spin_{p}^{\mathbb{C}}}\rightarrow
P_{SO(p)}\times P_{S^{1}}^{1}, 
\end{equation*}
the $Spin^{\mathbb{C}}$ structure of $M$.

From the natural inclusion $\left. P_{SO(n)}\right\vert _{M}\subset
P_{SO(n)},$ define the $\mathcal{S}\times S^{1}$-principal bundle:
\begin{equation*}
\left. P_{Spin^{\mathbb{C}}_n}\right\vert _{M}:=\left( \Lambda ^{\mathbb{C}Q}\right) ^{-1}\left( \left. P_{SO(n)}\right\vert _{M}\times \left.
P_{S^{1}}\right\vert _{M}\right) . 
\end{equation*}
If we denote the transition functions of $\left. P_{Spin^{\mathbb{C}}_n}\right\vert _{M}$ by $\tilde{g}_{\alpha \beta} = \left[ h_{\alpha \beta} , z_{\alpha \beta} \right] \in Spin_n^{\mathbb{C}}$ e and the transition functions of $P_{Spin_{(n)}^{\mathbb{C}}}$ by $\tilde{g}_{\alpha \beta}^1 = \left[ h_{\alpha \beta}^1 , z_{\alpha \beta}^1 \right] \in Spin_n^{\mathbb{C}}$ it is not difficult to define a $Spin^{\mathbb{C}}$ structure in $E$
\begin{equation*}
\Lambda^{2\mathbb{C}Q}:P_{Spin_{(m)}^{\mathbb{C}}}\rightarrow
P_{SO(m)}\times P_{S^{1}}^{2},  
\end{equation*}
where the bundle $P_{Spin_{(m)}^{\mathbb{C}}}$ is such that the transiction funcitons $\tilde{g}_{\alpha \beta}^2 = \left[ h_{\alpha \beta}^2 , z_{\alpha \beta}^2 \right]$ satisfy $\tilde{g}_{\alpha \beta}^1 \tilde{g}_{\alpha \beta}^2 = \tilde{g}_{\alpha \beta}$.

The transition functions that define $\left.
P_{S^{1}}\right\vert _{M}$ 
are the product of the transitions functions of $P_{S^{1}}^{1}$ e $P_{S^{1}}^{2}$, there is a morphism
canonical $\Phi :P_{S^{1}}^{1}\times _{M}P_{S^{1}}^{2}\rightarrow \left.
P_{S^{1}} \right|_M$ such that $\Phi (p_{1}\cdot s_{1},p_{2}\cdot s_{2})=\Phi
(p_{1},p_{2})s_{1}s_{2},$ $p_{1}\in P_{S^{1}}^{1},$ $p_{2}\in P_{S^{1}}^{2},$
$s_{1},s_{2}\in S^{1},$ which in a local trivialization makes the following diagram commute:
\begin{center}
	\begin{equation*}
	\xymatrixcolsep{1pc}\xymatrixrowsep{1pc}\xymatrix{
	P_{S^{1}}^{1}\times _{M}P_{S^{1}}^{2} \ar[rr]^{\ \Phi} \ar[dd] & & \left. P_{S^{1}} \right|_M \ar[dd] \\	& \\
	U_{\alpha} \times S^1 \times S^1 \ar[rr]^{\phi_\alpha} & & U_{\alpha} \times S^1	}
	\end{equation*}
\end{center}
where $\phi_\alpha(x,r,s)=(x,rs).$

\subsection{Adapted connections}

Admit the following connection $ 1 $-form:

\begin{equation*}
w^{\mathbb{C}Q}=w^{Q}\oplus iA:T(P_{SO(n)}\times P_{S^{1}})\rightarrow 
\mathfrak{so}(n)\oplus i\mathbb{R}, 
\end{equation*}%
where $w^{Q}:T(P_{SO(n)})\rightarrow \mathfrak{so}(n)$ is the Levi-Civita connection of $P_{SO(n)}$ and $iA:TP_{S^{1}}\rightarrow i\mathbb{R}$ is an arbitrary connection on $P_{S^{1}}.$

The connection $1$-form on $\left. P_{SO(n)}\right\vert _{M}\times
\left. P_{S^{1}}\right\vert _{M}$ will be defined by
\begin{eqnarray*}
	&w^{\mathbb{C}ad} :T\left( \left. P_{SO(n)}\right\vert _{M}\times \left.
	P_{S^{1}}\right\vert _{M}\right) \rightarrow \left( \mathfrak{so}(p)\oplus 
	\mathfrak{so}(q)\right) \oplus i\mathbb{R},& \notag \\
	&w^{\mathbb{C}ad}(dh(X)\oplus dl(X)) =(w^{M}\oplus w^{\bot })(dh(X))\oplus
	iA(dl(X)),&
\end{eqnarray*}%
where $w^{M}:TP_{SO(n)}\rightarrow \mathfrak{so}(n)$\ is the Levi-Civita connection of $P_{SO(n)}$, $w^{\bot }:TP_{SO(m)}\rightarrow \mathfrak{so}%
(m) $ and is normal the connection. For $p\in M$ and $X\in T_{p}M$ Gauss's formula tells us that, with respect to decomposition $%
T_{p}Q=T_{p}M\oplus E_{p}$,%
\begin{eqnarray*}
\nabla _{X}^{Q} &=&\left( 
\begin{array}{cc}
\nabla _{X}^{M} & -B(X,~)^{\ast } \notag \\ 
B(X,~) & \nabla _{X}^{\bot }%
\end{array}%
\right) , \\
\nabla _{X}^{Q}-\nabla _{X}^{M}\oplus \nabla _{X}^{\bot } &=&\left( 
\begin{array}{cc}
0 & -B(X,~)^{\ast } \\ 
B(X,~) & 0%
\end{array}%
\right) .
\end{eqnarray*}
That in matrix form can be written as:%
\begin{eqnarray}
&&w^{\mathbb{C}Q}(dh(X)\oplus dl(X))-w^{\mathbb{C}ad}(dh(X)\oplus dl(X))
\label{diference} \notag \\
&=&w^{Q}(dh(X))\oplus iA(dl(X))-(w^{M}\oplus w^{\bot })(dh(X))\oplus
iA(dl(X)) \notag \\
&=&\left( 
\begin{array}{cc}
0 & -\left\langle B(X,e_{i}),f_{j}\right\rangle _{j,i} \\ 
\left\langle B(X,e_{i}),f_{j}\right\rangle _{i,j} & 0%
\end{array}
\right) \oplus 0.
\end{eqnarray}

Consider $w^{Spin^{\mathbb{C}}Q}:TP_{Spin^{\mathbb{C}}(n)}\rightarrow 
\mathfrak{spin}(n)\oplus i\mathbb{R}$ the lifted 1-form to $\mathfrak{%
	spin}(n)\oplus i\mathbb{R}$, by the isomorphism $\mathfrak{spin}(n)\oplus i%
\mathbb{R\simeq }\mathfrak{so}(n)\oplus i\mathbb{R}$; and $$w^{Spin^{\mathbb{C%
	}}ad}:T\left. P_{Spin^{\mathbb{C}}(n)}\right\vert _{M}\rightarrow 
\mathfrak{spin}(p)\oplus \mathfrak{spin}(q)\oplus i\mathbb{R}\subset 
\mathfrak{spin}(n)\oplus i\mathbb{R}$$ the lift 1-form to $\mathfrak{spin}(p)\oplus \mathfrak{spin}(q)\oplus i\mathbb{R}\subset \mathfrak{spin}(n)\oplus i\mathbb{R}$:%
\begin{eqnarray*}
\lambda _{m\ast }^{\mathbb{C}}\left( w^{Spin^{\mathbb{C}}Q}(dh(X)\oplus
dl(X))\right) &=&w^{\mathbb{C}Q}(dh(X)\oplus dl(X)); \\
\lambda _{m\ast }^{\mathbb{C}}\left( w^{Spin^{\mathbb{C}}ad}(dh(X)\oplus
dl(X))\right) &=&w^{\mathbb{C}ad}(dh(X)\oplus dl(X)).
\end{eqnarray*}

From Eq.(\ref{diference}) we will have:%
\begin{eqnarray} \label{relatingconnections}
&&\lambda _{m\ast }^{\mathbb{C}}\left( w^{Spin^{\mathbb{C}}Q}(dh(X)\oplus
dl(X))\right) -\lambda _{m\ast }^{\mathbb{C}}\left( w^{Spin^{\mathbb{C}%
	}ad}(dh(X)\oplus dl(X))\right) \notag \\
&=&\left( 
\begin{array}{cc}
0 & -\left\langle B(X,e_{i}),f_{j}\right\rangle _{j,i} \\ 
\left\langle B(X,e_{i}),f_{j}\right\rangle _{i,j} & 0%
\end{array}%
\right) \oplus 0, \notag \\
&&\left( w^{Spin^{\mathbb{C}}Q}(dh(X)\oplus dl(X))\right) -w^{Spin^{\mathbb{C%
	}}ad}(dh(X)\oplus dl(X))  \label{2} \notag \\
&=&\frac{1}{2}\sum^p_{i=1}\sum^q_{j=1}\left\langle B(X,e_{i}),f_{j}\right\rangle
e_{i}\cdot f_{j}\oplus 0.
\end{eqnarray}

\section{Clifford algebra ideal spinors} \label{idealspinors}

Consider irreducible representation by left multiplication on the minimal ideals of $\mathbb{C}l_{n},$ according with notation established in Section \ref{representacaoideal}:
\begin{eqnarray*}
\rho _{i}^{n} &:&\mathbb{C}l_{2k}\rightarrow \End_{\mathbb{C}}\left(\mathbb{C}l_{2k}\mathrm{f}_{i}\right)=\End_{\mathbb{C}}\left(I_{i}^{(2k)}\right),i=1,\cdots ,2^{k}, 
\notag \\
&& a \mapsto \rho _{i}^{n}(a):\beta \mathrm{f}_{i}\mapsto a\beta \mathrm{f}%
_{i},~~~\text{for the case }n=2k\text{ even.}  \notag \\
\rho _{i;\lambda}^{n} &:&\mathbb{C}l_{2k+1}\rightarrow \End_{\mathbb{C}}\left(\mathbb{C}l_{(2k+1)}\mathrm{f}_{i;\lambda}\right)=\End_{\mathbb{C}}\left(I_{i;\lambda}^{(2k+1)}\right),i=1,\cdots
,2^{k},\lambda=0,1,  \notag \\
&& a \mapsto \rho _{i;\lambda}^{n}(a):\beta \mathrm{f}_{i;\lambda}\mapsto a\beta \mathrm{f%
}_{i;\lambda},~~~\text{for the case }n=2k+1\text{ odd,}
\end{eqnarray*}

Their restrictions to $Spin_{n}^{\mathbb{C}}$ will also be denoted by $\rho^n _{i}$ or $\rho^n _{i;\lambda}$. Remember that the
representations $\rho^n _{1}\simeq \cdots \simeq \rho^n _{2^{k}}$ are equivalent, as well as $\rho^n _{1;\lambda}\simeq \cdots \simeq \rho^n _{2^{k};\lambda}$, $%
\lambda=0,1$.

In what follows, always considering the parities of $m$ and $n$, we suppress indexes $ i, j, l $ when there is no risk of confusion. A more detailed description of the bundles below can be found in the \ref{apendicea}.

Given these irreducible representations, we define the following complex spinors bundles:
\begin{eqnarray}
\sum\nolimits^{\mathbb{C}}M &:&=P_{Spin_{p}^{\mathbb{C}}}\times _{\rho
	^{p}}I^{p},~\sum\nolimits^{\mathbb{C}}E:=P_{Spin_{q}^{\mathbb{C}}}\times
_{\rho ^{q}}I^{q},  \notag \\
\sum\nolimits^{\mathbb{C}}Q &:&=P_{Spin_{n}^{\mathbb{C}}}\times _{\rho
	^{n}}I^{n},~\left. \sum\nolimits^{\mathbb{C}}Q\right\vert
_{M}:=\left. P_{Spin_{n}^{\mathbb{C}}}\right\vert _{M}\times _{\rho
	^{n}}I^{n}.  \notag
\end{eqnarray}

From \cite[pp. 5]{bar98} we can compare the spinor modules of the Clifford algebra of a direct sum with the spinor modules associated with each factor. Besides that, using the fact that the transition functions of $\left. P_{Spin^{\mathbb{C}	}_n}\right\vert _{M}$ are the product of the transition functions of $P_{Spin^{\mathbb{C}}_p}$ and $P_{Spin^{\mathbb{C}}_q}$ , it's not difficult to get:
\begin{eqnarray*}
&& \text{For the case that } p \text{ and } q \text{ are not both odd:} \notag\\
&& \sum\nolimits^{\mathbb{C}} :=\sum\nolimits^{\mathbb{C}}M\otimes \sum\nolimits^{\mathbb{C}}E \simeq \left. \sum\nolimits^{\mathbb{C}}Q\right\vert _{M}. \notag \\
&& \text{For the case that } p \text{ and } q \text{ are both odd:} \notag\\
&& \sum\nolimits^{\mathbb{C}} :=\left( \sum\nolimits^{\mathbb{C}}M\otimes \sum\nolimits^{\mathbb{C}}E \right) \oplus \left( \sum\nolimits^{\mathbb{C}}M\otimes \sum\nolimits^{\mathbb{C}}E \right)  \simeq \left. \sum\nolimits^{\mathbb{C}}Q\right\vert _{M}.
\end{eqnarray*}

Fix $\nabla ^{\Sigma ^{\mathbb{C}}Q},\nabla ^{\Sigma ^{\mathbb{C}}M}$ and $%
\nabla ^{\Sigma ^{\mathbb{C}}E}$ the Levi-Civita connections on $\sum^{\mathbb{C}%
}Q,\sum^{\mathbb{C}}M$ and $\sum^{\mathbb{C}}E$ respectively. The connection on $\sum\nolimits^{\mathbb{C}}$ will be given by

\begin{equation*}
\nabla ^{\Sigma ^{\mathbb{C}}}:=\nabla ^{\Sigma ^{\mathbb{C}}M}\otimes
Id+Id\otimes \nabla ^{\Sigma ^{\mathbb{C}}E},
\end{equation*}
from which, using eq.~\eqref{2}, it follows the spinorial Gauss formula:

\begin{equation}
\nabla _{X}^{\Sigma ^{\mathbb{C}}Q}-\nabla _{X}^{\Sigma ^{\mathbb{C}}}=%
\frac{1}{2}\sum_{i=1}^{p}\sum_{j=1}^{q}\left\langle B(X,e_{i}),f_{j}\right\rangle
e_{i}\cdot f_{j}=\frac{1}{2}\sum_{i=1}^{p}e_{i}\cdot B(X,e_{i})\cdot
\label{gauss2}
\end{equation}

\section{A $\mathbb{C}$-valued hermitian product on the spinor bundle} \label{hermitianproduct1}

Let's define the following antiautomorphism
\begin{eqnarray*}
\tau &:&\mathbb{C}l_{n}\rightarrow \mathbb{C}l_{n}  \notag \\
\tau (a~e_{i_{1}}e_{i_{2}}\cdots e_{i_{k}}) &:&=(-1)^{k}\bar{a}%
~e_{i_{k}}\cdots e_{i_{2}}e_{i_{1}},
\end{eqnarray*}
where $\{e_{1},\cdots e_{n}\}$ is an orthonormal basis of $\mathbb{R}^{n}\subset \mathbb{C}l_{n}$ and $a\in \mathbb{C}.$ For simplicity, we can
write $\tau (\xi )=\bar{\xi},$ $\xi \in \mathbb{C}l_{n}.$

\begin{lemma}
When we consider the isomorphism (sec.~\ref{sec15}) of $\mathbb{C}l_{n}$ with $\mathbb{C}(2^{\frac{n}{2}})$ or $\mathbb{C}(2^{\frac{n-1}{2}})\oplus \mathbb{C}(2^{\frac{n-1}{2}})$ the antiautomorphism $\tau $ is translated as the conjugate transpose in matrix algebra.
\end{lemma}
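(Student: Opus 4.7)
The plan is to recognise $\tau$ as an abstract conjugate-linear anti-involution on $\mathbb{C}l_n$ and check that it agrees with the conjugate transpose on a generating set under a suitably chosen isomorphism $\mathbf{i}_n$, then extend by linearity and multiplicativity. Directly from the definition one verifies that $\tau$ is conjugate-linear, satisfies $\tau(\xi\eta) = \tau(\eta)\tau(\xi)$, and $\tau^2 = \mathrm{id}$; these are precisely the defining properties of the operation $A \mapsto A^*$ on $\mathbb{C}(N)$. With the sign convention $e_i^2 = -1$ used in the paper (as forced by $Cl_2 \simeq \mathbb{H}$ in Table~\ref{classification}), the definition gives $\tau(e_i) = -e_i$, so each generator is both $\tau$-unitary, since $\tau(e_i)e_i = 1$, and $\tau$-anti-Hermitian.

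Next I would show that the isomorphism $\mathbf{i}_n$ can be chosen so that each matrix $\gamma_i := \mathbf{i}_n(e_i)$ is anti-Hermitian and unitary, that is, $\gamma_i^* = -\gamma_i = \gamma_i^{-1}$. Concretely one starts with $\gamma_1 = i\sigma_1$, $\gamma_2 = i\sigma_2$ in dimension two and extends via the usual tensor-product recursion for the Clifford algebra, handling the odd case through the direct-sum decomposition $\mathbb{C}l_{2k+1} \simeq \mathbb{C}l_{2k}\oplus \mathbb{C}l_{2k}$. Compatibility with the specific matrix form displayed for $\mathbf{i}_n(\mathrm{f}_i)$ in Section~\ref{sec15} is then automatic, because those displayed matrices are Hermitian diagonal projections.

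With this choice of $\mathbf{i}_n$, consider the map $f : \mathbb{C}l_n \to \mathbb{C}(2^k)$ defined by $f(\xi) := \mathbf{i}_n(\tau(\xi))^*$. A short check shows that $f$ is $\mathbb{C}$-linear and multiplicative, because the two conjugate-linearities cancel and the two anti-multiplicativities likewise cancel. Since $f(e_i) = (-\gamma_i)^* = \gamma_i = \mathbf{i}_n(e_i)$, the homomorphisms $f$ and $\mathbf{i}_n$ agree on the generating set $\{e_1,\ldots,e_n\}$ and hence on all of $\mathbb{C}l_n$, giving $\mathbf{i}_n(\tau(\xi)) = \mathbf{i}_n(\xi)^*$, which is the claim.

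The main subtle point is the choice of isomorphism: two algebra isomorphisms $\mathbb{C}l_n \to \mathbb{C}(N)$ differ by inner conjugation $A \mapsto PAP^{-1}$, and the identification of $\tau$ with conjugate transpose is preserved only when $P$ is unitary up to a scalar. The real content of the lemma is therefore the standard fact that one can construct gamma matrices that are simultaneously anti-Hermitian and unitary, which is the convention implicitly used in the matrix form of $\mathbf{i}_n$ in Section~\ref{sec15}.
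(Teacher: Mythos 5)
Your proof is correct and reaches the same conclusion as the paper, but by a somewhat different route. The paper appeals to a result from Friedrich's book asserting the existence of a Hermitian product on the spinor module with respect to which Clifford multiplication by vectors acts skew-adjointly; it then uses $\tau(v)=-v$ to identify $i_{2k}(\tau(v))$ with the adjoint of $i_{2k}(v)$, and compresses the remaining work into the phrase ``choosing a convenient base, without loss of generality'' plus a one-line assertion that the identity extends from vectors to all of $\mathbb{C}l_n$. You instead construct the isomorphism $\mathbf{i}_n$ explicitly so that each generator image $\gamma_i$ is simultaneously anti-Hermitian and unitary, and then close by observing that $f(\xi):=\mathbf{i}_n(\tau(\xi))^{*}$ is a unital $\mathbb{C}$-algebra homomorphism agreeing with $\mathbf{i}_n$ on $\{e_1,\dots,e_n\}$, hence everywhere. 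This buys clarity at exactly the two places the paper is terse: the extension from vectors to arbitrary elements is handled cleanly by the homomorphism argument rather than asserted, and the ``convenient base'' is pinned down as the requirement that $\mathbf{i}_n$ differ from your explicit choice only by a unitary conjugation. The one spot you leave slightly loose is compatibility with the specific matrix form displayed for $\mathbf{i}_n(\mathrm{f}_i)$ in Section~\ref{sec15}; this is indeed unproblematic because the $\mathrm{f}_i$ are self-adjoint orthogonal idempotents, which a further unitary conjugation can put into the displayed diagonal form without disturbing anti-Hermiticity of the $\gamma_i$, but one sentence to that effect would make the argument fully airtight.
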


\begin{proof}
	If $n=2k$ is even, consider the isomorphism $i_{2k}:\mathbb{C}l_{2k}\rightarrow \mathbb{C}(2^{k})$ and the representation by left multiplication
	\begin{equation*}
	\mathbb{C}(2^{k}) \rightarrow  \End_{\mathbb{C}}(\mathbb{C}^{2^{k}}). \notag
	\\
%	&& A\mapsto i_{2k}(v)A.
	\end{equation*}
	We know \cite[pp. 24]{friedrich00}  that there is in $\mathbb{C}(2^{k})$ a hermetian product $\left\langle \cdot ,\cdot \right\rangle $ such that
	\begin{equation*}
	\left\langle i_{2k}(v)A,B\right\rangle  =-\left\langle
	A,i_{2k}(v)B\right\rangle ,  \ \ \ \
	\forall A,B \in \mathbb{C}(2^{k});v\in \mathbb{R}^{n}\subset \mathbb{C}%
	l_{n},
	\end{equation*}
	but $\tau (v)=-v,$ therefore
	\begin{equation*}
	\left\langle i_{2k}(v)A,B\right\rangle  =\left\langle A,i_{2k}(\tau
	(v))B\right\rangle ,  \ \ \ \
	\forall A,B \in \mathbb{C}^(2^{k});v\in \mathbb{R}^{n}\subset \mathbb{C}%
	l_{n}.
	\end{equation*}%
	Thus $,i_{2k}(\tau (v))$ is the adjoint operator $i_{2k}(v)$ with respect to the product $\left\langle \cdot ,\cdot \right\rangle $. Choosing a convenient base, without loss of generality we have that $i_{2k}(\tau (v))$ is the conjugate transposed matrix of $i_{2k}(v).$ Since this is valid for vectors then it is valid for all $\varphi \in \mathbb{C}l_n$, i.e.
	\begin{equation*}
	i_{2k}(\tau (\varphi )) = \left( i_{2k}(\varphi )\right) ^{\ast },  \ \ \ \
	\forall \varphi  \in \mathbb{C}l_{n}.
	\end{equation*}
	That is, according to isomorphism $i_{2k}$, the antiautomorphism $\tau $ is translated as the conjugate transpose in matrix algebra.
	
	Note that the same is true if $n=2k+1$ odd, it is sufficient to consider the two non-equivalent natural representations $\mathbb{C}(2^{k})\oplus \mathbb{C}(2^{k})\rightarrow \End_{\mathbb{C}}(\mathbb{C}^{2^{k}}).$
\end{proof}

Finally we can present the following definition:

\begin{definition}
	We have the following $\mathbb{C}l_{n}$-valued hermitian product 
	\begin{eqnarray*}
	\left\langle \left\langle \cdot ,\cdot \right\rangle \right\rangle &:&%
	\mathbb{C}l_{n}\times \mathbb{C}l_{n}\rightarrow \mathbb{C}l_{n}  \notag \\
	(\xi _{1},\xi _{2}) &\mapsto &\left\langle \left\langle \xi _{1},\xi
	_{2}\right\rangle \right\rangle =\tau (\xi _{2})\xi _{1}.  \label{product2}
	\end{eqnarray*}
\end{definition}

\begin{remark} Note that the following statements are valid:
	\begin{enumerate}
		\item $\left\langle \left\langle \cdot ,\cdot \right\rangle
		\right\rangle $ is $Spin_{n}^{\mathbb{C}}$-invariant:%
		\begin{equation*}
		\left\langle \left\langle (g\otimes s)\xi _{1},(g\otimes s)\xi
		_{2}\right\rangle \right\rangle  =s\overline{s}\tau (\xi _{2})\tau (g)g\xi
		_{1}=\tau (\xi _{2})\xi _{1}=\left\langle \left\langle \xi _{1},\xi
		_{2}\right\rangle \right\rangle ,  
        ( g\otimes s ) \in Spin_{n}^{\mathbb{C}}\subset \mathbb{C}l_{n},
		\end{equation*}
		since $Spin_{n}\subset \{g\in Cl_{n}^{0};\bar{g}g=1\}$ and $s\in S^{1}\subset 
		\mathbb{C}$.
		
		\item The eq.~\eqref{product2} induces the following $\mathbb{C}$-valued map 
		\begin{gather*}
		\sum\nolimits^{\mathbb{C}}Q\times \sum\nolimits^{\mathbb{C}}Q\rightarrow 
		\mathbb{C}  \notag \\
		(\varphi _{1},\varphi _{2})=([P,[\varphi _{1}]],[P,[\varphi _{2}]])\mapsto
		\left\langle \left\langle \lbrack \varphi _{1}],[\varphi _{2}]\right\rangle
		\right\rangle =\tau ([\varphi _{2}])[\varphi _{1}],
		\end{gather*}
		where $[\varphi _{1}],[\varphi _{2}]\in I^{n}=\mathbb{C}l_{n}\mathrm{f}$ are the representatives of $\varphi _{1},\varphi _{2}$ in a given $Spin^{\mathbb{C}}_n$-frame $P\in \Gamma \left( P_{Spin^{\mathbb{C}%
			}_n}\right) .$
		
		 $I^{n}=\mathbb{C}l_{n}\mathrm{f}$ is minimal ideal, with $\mathrm{f}$ a primitive idempotent. 
		
		Note that $\left\langle \left\langle \lbrack \varphi _{1}],[\varphi _{2}]\right\rangle
		\right\rangle = \tau ([\varphi
		_{2}])[\varphi _{1}]\in \tau (\mathrm{f})\mathbb{C}l_{(n+m)}\mathrm{f}=%
		\mathrm{f}\mathbb{C}l_{(n+m)}\mathrm{f}\simeq \mathbb{C}.$
	\end{enumerate}
\end{remark}

\begin{lemma}
The connection $\nabla ^{\Sigma ^{\mathbb{C}}Q}$ is compatible with the product $\left\langle \left\langle \cdot ,\cdot \right\rangle \right\rangle $
\end{lemma}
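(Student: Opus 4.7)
The plan is to reduce the lemma to an algebraic fact about the antiautomorphism $\tau$ on the Lie algebra $\mathfrak{spin}_n \oplus i\mathbb{R}$, then apply the local formula for the spinorial covariant derivative in a $\Spc$-frame.

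First I would work in a local spinorial frame $P \in \Gamma(P_{Spin^{\mathbb{C}}_n})$ and write $\varphi_k = [P,[\varphi_k]]$, $k=1,2$, with $[\varphi_k] \in I^n = \mathbb{C}l_n \mathrm{f}$. Writing $\omega := w^{Spin^{\mathbb{C}}Q} \circ dP$ for the pulled-back connection 1-form, the standard formula for the induced connection on an associated bundle gives
\begin{equation*}
\nabla^{\Sigma^{\mathbb{C}}Q}_X \varphi_k = \bigl[P,\; X[\varphi_k] + \omega(X) \cdot [\varphi_k]\bigr],
\end{equation*}
where the dot is left Clifford multiplication in $I^n$. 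Plugging this into $\langle\langle \nabla^{\Sigma^{\mathbb{C}}Q}_X\varphi_1,\varphi_2\rangle\rangle + \langle\langle\varphi_1,\nabla^{\Sigma^{\mathbb{C}}Q}_X\varphi_2\rangle\rangle$ and using that $\tau$ is an antiautomorphism, i.e.\ $\tau(ab)=\tau(b)\tau(a)$, the sum collapses to
\begin{equation*}
\tau([\varphi_2])\,X[\varphi_1] + \tau(X[\varphi_2])\,[\varphi_1] + \tau([\varphi_2])\bigl(\omega(X) + \tau(\omega(X))\bigr)[\varphi_1].
\end{equation*}
Since $X\langle\langle\varphi_1,\varphi_2\rangle\rangle = X(\tau([\varphi_2])[\varphi_1]) = \tau(X[\varphi_2])[\varphi_1] + \tau([\varphi_2])X[\varphi_1]$ by the ordinary Leibniz rule (noting $\tau$ is continuous and linear so it commutes with $X$), compatibility is equivalent to the identity
\begin{equation*}
\tau(\omega(X)) = -\omega(X).
\end{equation*}

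The key algebraic step is therefore to verify that $\tau$ acts as $-\mathrm{id}$ on $\mathfrak{spin}_n \oplus i\mathbb{R}$. The Lie algebra $\mathfrak{spin}_n$ is spanned by the bivectors $e_ie_j$ with $i<j$, and from the definition of $\tau$ one computes
\begin{equation*}
\tau(e_ie_j) = e_je_i = -e_ie_j,
\end{equation*}
using the Clifford relation $e_ie_j+e_je_i=0$. For the abelian summand, if $is \in i\mathbb{R}$ then $\tau(is) = \overline{is} = -is$. Hence $\tau(\xi) = -\xi$ for every $\xi \in \mathfrak{spin}_n \oplus i\mathbb{R}$, and in particular for $\omega(X)$, which closes the argument.

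The main obstacle, and the only place some care is needed, is the bookkeeping between the connection on the principal bundle and the induced connection on $\Sigma^{\mathbb{C}}Q$ via the representation $\rho^n$; namely, one must justify the local formula $\nabla^{\Sigma^{\mathbb{C}}Q}_X\varphi = [P,\, X[\varphi] + \omega(X)\cdot[\varphi]]$ with the same $\omega$ that takes values in $\mathfrak{spin}_n\oplus i\mathbb{R}$ (as opposed to its image under $\lambda_{n*}^{\mathbb{C}}$ in $\mathfrak{so}_n \oplus i\mathbb{R}$). Once that identification is in place, the rest is the antiautomorphism computation above, and the compatibility $X\langle\langle\varphi_1,\varphi_2\rangle\rangle = \langle\langle\nabla^{\Sigma^{\mathbb{C}}Q}_X\varphi_1,\varphi_2\rangle\rangle + \langle\langle\varphi_1,\nabla^{\Sigma^{\mathbb{C}}Q}_X\varphi_2\rangle\rangle$ follows.
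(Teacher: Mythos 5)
Your proof is correct and takes essentially the same approach as the paper: both work in a local frame, write the covariant derivative as $X[\varphi]$ plus Clifford multiplication by the $\mathfrak{spin}_n\oplus i\mathbb{R}$-valued connection term, and use that $\tau$ flips the sign of bivectors $e_ie_j$ and of $i\mathbb{R}$. The only cosmetic difference is that you package the cancellation as the single identity $\tau|_{\mathfrak{spin}_n\oplus i\mathbb{R}}=-\mathrm{id}$, whereas the paper verifies it term by term on $\frac{1}{2}\sum_{i<j}w_{ij}e_ie_j+\frac{1}{2}iA^l$.
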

\begin{proof}
	Fix $s=(e_{1},...,e_{n}):U\subset M\subset Q\rightarrow P_{SO(n+m)}$
	a local section of the frame bundle, $l:U\subset M\subset
	Q\rightarrow P_{s^{1}}$ and a local section of the $S^{1}$-principal bundle , $w^{Q}:T(P_{SO(n+m)})\rightarrow so(n+m)$ the Levi-Civita connection of $P_{SO(n+m)}$ and $iA:TP_{S^{1}}\rightarrow i\mathbb{R}$ an arbitrary connection in $P_{S^{1}}$, denote by $w^{Q}(ds(X))=(w_{ij}(X))\in so(n+m),$ $iA(dl(X))=iA^{l}(X).$
	
	If $\psi =[P,[\psi ]]\ $ and $\psi ^{\prime }=[P,[\psi ^{\prime }]]$ are sections of $\sum\nolimits^{\mathbb{C}}Q$ we will have:
	\begin{eqnarray*}
		\nabla _{X}^{\Sigma ^{\mathbb{C}}Q}\psi &=&\left[ P,X([\psi ])+\frac{1}{2}%
		\sum\nolimits_{i<j}w_{ij}(X)e_{i}e_{j}\cdot \lbrack \psi ]+\frac{1}{2}%
		iA^{l}(X)[\psi ]\right] , \\
		\left\langle \left\langle \nabla _{X}^{\Sigma ^{\mathbb{C}}Q}\psi ,\psi
		^{\prime }\right\rangle \right\rangle &=&\overline{[\psi ^{\prime }]}\left(
		X([\psi ])+\frac{1}{2}\sum\nolimits_{i<j}w_{ij}e_{i}e_{j}\cdot \lbrack \psi
		]+\frac{1}{2}iA^{l}(X)[\psi ]\right) , \\
		\left\langle \left\langle \psi ,\nabla _{X}^{\Sigma ^{\mathbb{C}}Q}\psi
		^{\prime }\right\rangle \right\rangle &=&\overline{\left( X([\psi ^{\prime
			}])+\frac{1}{2}\sum\nolimits_{i<j}w_{ij}e_{i}e_{j}[\psi ^{\prime }]+\frac{1%
			}{2}A^{l}[\psi ^{\prime }]\right) }[\psi ] \notag \\
		&=&\left( X(\overline{[\psi ^{\prime }]})+\frac{1}{2}\sum%
		\nolimits_{i<j}w_{ij}\overline{e_{i}e_{j}[\psi ^{\prime }]}+\frac{1}{2}%
		\overline{A^{l}}\overline{[\psi ^{\prime }]}\right) [\psi ] \notag \\
		&=&\left( X(\overline{[\psi ^{\prime }]})-\frac{1}{2}\sum%
		\nolimits_{i<j}w_{ij}\overline{[\psi ^{\prime }]}e_{i}e_{j}-\frac{1}{2}A^{l}%
		\overline{[\psi ^{\prime }]}\right) [\psi ],
	\end{eqnarray*}
	then
	\begin{eqnarray*}
		\left\langle \left\langle \nabla _{X}^{\Sigma ^{\mathbb{C}}Q}\psi ,\psi
		^{\prime }\right\rangle \right\rangle +\left\langle \left\langle \psi
		,\nabla _{X}^{\Sigma ^{\mathbb{C}}Q}\psi ^{\prime }\right\rangle
		\right\rangle &=&\overline{[\psi ^{\prime }]}X(\xi )+X(\overline{[\psi
			^{\prime }]})[\psi ], \\
		X\left\langle \left\langle \psi ,\psi ^{\prime }\right\rangle \right\rangle
		&=&X\left( \overline{\xi ^{\prime }}\xi \right) =X(\overline{\xi ^{\prime }}%
		)\xi +\overline{\xi ^{\prime }}X(\xi ).
	\end{eqnarray*}
\end{proof}

\begin{lemma}
The map $\left\langle \left\langle \cdot ,\cdot \right\rangle
	\right\rangle :\sum\nolimits^{\mathbb{C}}Q\times \sum\nolimits^{\mathbb{C}%
	}Q\rightarrow \mathbb{C}l_{(n+m)}$ satisfies:
	
	\begin{enumerate}
		\item $\left\langle \left\langle X\cdot \psi ,\varphi \right\rangle
		\right\rangle =-\left\langle \left\langle \psi ,X\cdot \varphi \right\rangle
		\right\rangle ,~\psi ,\varphi \in \sum\nolimits^{\mathbb{C}}Q,~X\in TQ.$
		
		\item $\tau \left\langle \left\langle \psi ,\varphi \right\rangle
		\right\rangle =\left\langle \left\langle \varphi ,\psi \right\rangle
		\right\rangle ,~\psi ,\varphi \in \sum\nolimits^{\mathbb{C}}Q$
	\end{enumerate}
\end{lemma}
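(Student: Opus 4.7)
The plan is to reduce both identities to elementary algebraic properties of the antiautomorphism $\tau$ on $\mathbb{C}l_n$, since the bracket on $\Sigma^{\mathbb{C}}Q$ is induced from the bracket $\langle\langle \xi_1,\xi_2\rangle\rangle=\tau(\xi_2)\xi_1$ on $\mathbb{C}l_n$ via chosen representatives $[\psi],[\varphi]$ in the minimal ideal $I^n=\mathbb{C}l_n\mathrm{f}$. First I would record the two facts about $\tau$ that do the work: namely, that $\tau$ is an antiautomorphism, i.e.\ $\tau(ab)=\tau(b)\tau(a)$ for all $a,b\in\mathbb{C}l_n$, and that on a vector $X\in\mathbb{R}^n\subset\mathbb{C}l_n$ it acts as $\tau(X)=-X$ (directly from the defining formula $\tau(ae_{i_1}\cdots e_{i_k})=(-1)^k\bar{a}\,e_{i_k}\cdots e_{i_1}$). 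I would also note that $\tau^2=\mathrm{id}$, since applying the defining formula twice yields $(-1)^{2k}=1$ and $\bar{\bar{a}}=a$.

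For part (1), picking local representatives one computes
\begin{equation*}
\langle\langle X\cdot\psi,\varphi\rangle\rangle=\tau([\varphi])\,X\,[\psi],
\end{equation*}
while
\begin{equation*}
\langle\langle \psi,X\cdot\varphi\rangle\rangle=\tau(X\,[\varphi])\,[\psi]=\tau([\varphi])\,\tau(X)\,[\psi]=-\tau([\varphi])\,X\,[\psi],
\end{equation*}
where the first equality uses the antiautomorphism property and the second uses $\tau(X)=-X$. Comparing the two gives the desired identity. The only subtlety is to observe that the Clifford action of $X\in TQ$ is represented by left multiplication by the corresponding vector in $\mathbb{R}^n\subset\mathbb{C}l_n$, so the formal calculation above descends correctly to the bundle.

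For part (2), I would simply apply $\tau$ to $\langle\langle\psi,\varphi\rangle\rangle=\tau([\varphi])[\psi]$ and use that $\tau$ is an involutive antiautomorphism:
\begin{equation*}
\tau\bigl(\tau([\varphi])[\psi]\bigr)=\tau([\psi])\,\tau\bigl(\tau([\varphi])\bigr)=\tau([\psi])\,[\varphi]=\langle\langle\varphi,\psi\rangle\rangle.
\end{equation*}
No obstacle is expected in the calculations themselves; the only thing to justify is well-definedness, i.e.\ that the identities are independent of the chosen $\mathrm{Spin}^{\mathbb{C}}_n$-frame used to extract $[\psi],[\varphi]$. This however is exactly the $\mathrm{Spin}^{\mathbb{C}}_n$-invariance of $\langle\langle\cdot,\cdot\rangle\rangle$ already recorded in the preceding remark, so both identities transfer unambiguously from $\mathbb{C}l_n$ to the spinor bundle $\Sigma^{\mathbb{C}}Q$.
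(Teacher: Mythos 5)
Your proof is correct and follows essentially the same approach as the paper: reduce to local representatives in $\mathbb{C}l_n$, use that $\tau$ is an involutive antiautomorphism with $\tau(X)=-X$ on vectors, and invoke $\mathrm{Spin}^{\mathbb{C}}_n$-invariance for well-definedness. The paper writes part (1) as a single chain of equalities rather than comparing the two sides separately, but this is a cosmetic difference only.
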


	\begin{proof}
		\begin{enumerate}
			\item $\left\langle \left\langle X\cdot \psi ,\varphi \right\rangle
			\right\rangle =\tau \lbrack \varphi ][X\cdot \psi ]=\tau \lbrack \varphi
			][X][\psi ]=-\tau \lbrack \varphi ]\tau \lbrack X][\psi ]=-\left\langle
			\left\langle \psi ,X\cdot \varphi \right\rangle \right\rangle $
			
			\item $\tau \left\langle \left\langle \psi ,\varphi \right\rangle
			\right\rangle =\tau (\tau \lbrack \varphi ][\psi ])=\tau \lbrack \psi
			][\varphi ]=\left\langle \left\langle \varphi ,\psi \right\rangle
			\right\rangle .$
		\end{enumerate}
	\end{proof}

\begin{remark}
Note that the same is valid for the bundles $\left. \sum^{\mathbb{C}}Q \right|_M,$ $\sum^{\mathbb{C}}M$, $\sum^{\mathbb{C}}E$, $%
\sum\nolimits^{\mathbb{C}}.$
\end{remark}

\section{Spinorial representation of $Spin^{\mathbb{C}}$ submanifolds in $%
	\mathbb{R}^{n}$ by irreducible complex Clifford algebra spinors} \label{principalsection}

Consider here a immersion $M\hookrightarrow Q=\mathbb{R}^{n}.$ Since $\mathbb{R}^{n}$ is contratible, there is a global section $s:\mathbb{R}^{n}\rightarrow P_{Spin^{\mathbb{C}}_n}$, and the corresponding orthonormal basis $h=(E_{1},\cdots ,E_{n}):\mathbb{R}^{n}\rightarrow
P_{SO(n)},$ and $l^{\prime }:\mathbb{R}^{n}\rightarrow P_{S^{1}}$, where $(h,l^{\prime })=\Lambda ^{\mathbb{CR}^{n}}(s)\in \Gamma (P_{SO(n)}\times
P_{S^{1}}).$ In an adapted local section $\tilde{s}:U\subset M\subset 
\mathbb{R}^{n}\rightarrow \left. P_{Spin^{\mathbb{C}}_n}\right\vert
_{M}\subset P_{Spin^{\mathbb{C}}_n}$ we will denote the corresponding orthonormal local bases by $\tilde{h}=(e_{1},\cdots ,e_{n}):U\subset M\subset 
\mathbb{R}^{n}\rightarrow \left. P_{SO(n)}\right\vert _{M},$ and $l=\left.
l^{\prime }\right\vert _{M}:U\subset M\subset \mathbb{R}^{n}\rightarrow
\left. P_{S^{1}}\right\vert _{M}.$ Let $B:TM \times TM \rightarrow E$ the second fundamental form of that immersion.\\

\textbf{Case $n=2k$ even:}

\begin{lemma} \label{spinoresidealcasopar}
	Given an immersion $M\hookrightarrow Q=\mathbb{R}^{n},$ if $n=2k$ is even, we have $2^{k}$ classical spinors $\varphi _{i}\in $ $\Sigma^{\mathbb{C}}=\left. P_{Spin^{\mathbb{C}}_n}\right\vert _{M}\times _{\rho ^{n}}I^{n}$ \emph{(}comming from the restriction of an irreducible representation $\rho^{n}:\mathbb{C}l_{n}\rightarrow \End_{\mathbb{C}}\left(I^{n}\right)$\emph{)},  orthonormal, according $\left\langle \left\langle \cdot ,\cdot \right\rangle
	\right\rangle $, which satisfies the following equation:
	\begin{equation*}
	\nabla _{X}^{\Sigma^{\mathbb{C}}}\varphi _{i}=-\frac{1}{2}%
	\sum_{j=1}^{p}e_{j}\cdot B(X,e_{j})\cdot \varphi _{i}+\frac{1}{2}%
	\textbf{i}~A^{l}(X)\cdot \varphi _{i},~~i=1,\cdots ,2^{k}.
	\end{equation*}
\end{lemma}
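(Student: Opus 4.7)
The plan is to construct the $\varphi_i$ as restrictions to $M$ of globally defined ``constant'' $Spin^{\mathbb{C}}$-spinors on the ambient space $Q=\mathbb{R}^n$, and then to read off the stated equation from the spinorial Gauss formula~\eqref{gauss2}. Since $\mathbb{R}^n$ is contractible, I would use the global section $s:\mathbb{R}^n\to P_{Spin^{\mathbb{C}}_n}$ already fixed in the setup of this section, together with its induced orthonormal frame $h=(E_1,\ldots,E_n)$ and $S^1$-frame $l'$. In these global Cartesian frames the Levi-Civita connection form vanishes identically, $w_{ij}\equiv 0$, while $iA^{l'}$ is some prescribed $1$-form on $\mathbb{R}^n$.

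Next, I would pick an orthonormal basis $v_1,\ldots,v_{2^k}$ of the minimal left ideal $I^n=\mathbb{C}l_n\mathrm{f}$ with respect to $\langle\langle\cdot,\cdot\rangle\rangle$. This is possible because the previous lemma identifies $\tau$ with the conjugate transpose in the matrix model $\mathbb{C}l_{2k}\simeq\mathbb{C}(2^k)$; under this isomorphism $I^n$ corresponds to a single column $\simeq\mathbb{C}^{2^k}$, the product $\langle\langle\cdot,\cdot\rangle\rangle$ on $I^n$ takes values in $\mathrm{f}\,\mathbb{C}l_n\,\mathrm{f}\simeq\mathbb{C}$, and the induced form is the standard positive-definite Hermitian inner product on $\mathbb{C}^{2^k}$. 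Define $\Phi_i:=[s,v_i]\in\Gamma(\Sigma^{\mathbb{C}}Q)$ and set $\varphi_i:=\Phi_i|_M\in\Gamma(\Sigma^{\mathbb{C}})$ via the identification $\left.\Sigma^{\mathbb{C}}Q\right|_M\simeq\Sigma^{\mathbb{C}}$ from section~\ref{idealspinors}. Since the representative $[\Phi_i]=v_i$ is literally constant and $w_{ij}\equiv 0$, the local expression for $\nabla^{\Sigma^{\mathbb{C}}Q}$ recalled in the proof of the compatibility lemma collapses to $\nabla_X^{\Sigma^{\mathbb{C}}Q}\Phi_i=\tfrac{1}{2}\,\mathbf{i}\,A^{l'}(X)\,\Phi_i$. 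Writing $A^l:=A^{l'}|_M$ and substituting into~\eqref{gauss2} yields exactly the identity claimed in the lemma, while orthonormality $\langle\langle\varphi_i,\varphi_j\rangle\rangle=\tau(v_j)v_i=\delta_{ij}$ follows from the $Spin^{\mathbb{C}}_n$-invariance of $\langle\langle\cdot,\cdot\rangle\rangle$ together with the choice of the $v_i$.

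The main delicate point is that the restriction of the global section $s$ to $M$ does not in general take values in the adapted subbundle $\left.P_{Spin^{\mathbb{C}}_n}\right|_M$: locally one must pass to an adapted section $\tilde{s}=s|_M\cdot g$ for a gauge $g$ with values in $\mathcal{S}_n\times S^1/\{\pm(1,1)\}$. Neither of the computations above is affected by this, because both sides of~\eqref{gauss2} are intrinsically defined as sections, so the evaluation of $\nabla^{\Sigma^{\mathbb{C}}Q}\Phi_i$ in the trivialization $s$ transports to $\tilde{s}$ automatically. In this sense the whole point of the intrinsic comparison between $\nabla^{\Sigma^{\mathbb{C}}Q}$ and $\nabla^{\Sigma^{\mathbb{C}}}$ carried out in section~\ref{512} is precisely to absorb this gauge ambiguity for free; all remaining verifications are routine from sections~\ref{idealspinors} and~\ref{hermitianproduct1}.
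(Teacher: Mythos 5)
Your proposal is correct and follows essentially the same route as the paper's proof: fix constant representatives in the minimal ideal, build the corresponding parallel-up-to-$A^{l'}$ spinors on $\mathbb{R}^n$ via the global section $s$, restrict to $M$, and read off the stated equation from the spinorial Gauss formula~\eqref{gauss2}. The only cosmetic difference is that you take an abstract orthonormal basis of $I^n$ rather than the explicit matrix-unit basis the paper writes down, and you argue the gauge change from $s$ to an adapted $\tilde{s}$ in prose while the paper carries it out by direct computation.
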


\begin{proof}
 Fix the constant elements
	\begin{eqnarray*}
	\lbrack \varphi _{i}] &\in &I_{1}^{n}=\mathbb{C}l_{n}\mathrm{f}_{1}\subset \mathbb{C}l_{n},\text{ }i=1,\cdots ,2^{k},  \notag \\
	\text{such that }i_{2k}\left( [\varphi _{i}]\right)  &=& 
	\left( 
	\begin{array}{ccccc}
	0 & \cdots  & 0 & \cdots  & 0 \\ 
	\vdots  & \ddots  & \vdots  &  & \vdots  \\ 
	1 & \cdots  & 0 & \cdots  & 0 \\ 
	\vdots  &  & \vdots  & \ddots  & \vdots  \\ 
	0 & \cdots  & 0 & \cdots  & 0%
	\end{array}%
	\right) i\text{-th line},
	\end{eqnarray*}
and define the following spinorial fields $$\varphi _{i}=[s,[\varphi _{i}]]\in
	\sum\nolimits_{1}^{\mathbb{C}}\mathbb{R}^{n}:=P_{Spin^{\mathbb{C}}_n}\times _{\rho _{1}^{n}}I_{1}^{n}.$$ Remember that $w^{Q}(dh(X))=(w_{ij}^{h}(X))\in so(n),$ $\textbf{i}A(dl^{\prime }(X))=\textbf{i}A^{l^{\prime
	}}(X)\in \textbf{i}\mathbb{R},$ then
	\begin{eqnarray*}
	\nabla _{X}^{\Sigma_1 ^{\mathbb{C}}Q}\varphi _{i} &=&\left[ s,X([\varphi
	_{i}])+\left\{ \frac{1}{2}\sum\nolimits_{i<j}w_{ij}^{h}(X)E_{i}E_{j}+\frac{1%
	}{2}\textbf{i}~A^{l^{\prime }}(X)\right\} \cdot \lbrack \varphi _{i}]\right]   \notag
	\\
	&=&\left[ s,\frac{1}{2}\textbf{i}~A^{l^{\prime }}(X)\cdot \lbrack \varphi _{i}]\right]
	.~i=1,\cdots ,2^{k}.  \label{parallel3}
	\end{eqnarray*}
	
	In an adapted local section $$
	\tilde{s}:U\subset M\subset \mathbb{R}^{n}\rightarrow \left. P_{Spin^{\mathbb{C}}_n}\right\vert _{M}\subset
	P_{Spin^{\mathbb{C}}_n} , ~~
	\tilde{s}=s\cdot (g\otimes 1),g\in
	Spin_{n},1\in S^{1},$$
	with the corresponding orthonormal local bases
    \begin{equation*}
        \tilde{h}=(e_{1},\cdots ,e_{n}):U\subset M\subset \mathbb{R}^{n}\rightarrow \left. P_{SO(n)}\right\vert _{M}, \ \ \ l=\left.l^{\prime}\right\vert _{M}:U\subset M\subset \mathbb{R}^{n}\rightarrow \left.P_{S^{1}}\right\vert _{M},
	\end{equation*}
	eq.~\eqref{parallel3} can be written as
	\begin{eqnarray*}
	    \nabla _{X}^{\Sigma_1 ^{\mathbb{C}}Q}\varphi _{i} &=&\left[ \tilde{s},X(\widetilde{[\varphi ]})+\left\{ \frac{1}{2}\sum\nolimits_{i<j}w_{ij}^{%
		\tilde{h}}(X)e_{i}e_{j}+\frac{1}{2}\textbf{i}~A^{l}(X)\right\} \cdot \widetilde{%
		[\varphi ]}\right]   \notag \\&=&\left[\tilde{s},\frac{1}{2}\textbf{i}~A^{l}(X)\cdot (g\otimes1)^{-1}[\varphi_{i}]\right] =\left[\tilde{s},\frac{1}{2}\textbf{i}~A^{l}(X)\cdot \widetilde{[\varphi_{i}]}\right]   \notag \\
		&=&\frac{1}{2}\textbf{i}~A^{l}(X)\cdot \varphi _{i}.~i=1,\cdots ,2^{k}.
	\end{eqnarray*}
	Finally applying the spinorial Gauss formula eq. \eqref{gauss2}
	\begin{eqnarray*}
	\nabla _{X}^{\Sigma _{1}^{\mathbb{C}}Q}\varphi _{i}-\nabla _{X}^{\Sigma
		_{1}^{\mathbb{C}}}\varphi _{i} &=&\frac{1}{2}\sum_{j=1}^{p}e_{j}\cdot
	B(X,e_{j})\cdot \varphi _{i}  \notag \\
	\frac{1}{2}\textbf{i}~A^{l}(X)\cdot \varphi _{i}-\nabla _{X}^{\Sigma _{1}^{\mathbb{C%
	}}}\varphi _{i} &=&\frac{1}{2}\sum_{j=1}^{p}e_{j}\cdot B(X,e_{j})\cdot
	\varphi _{i}  \notag \\
	\nabla _{X}^{\Sigma _{1}^{\mathbb{C}}}\varphi _{i} &=&-\frac{1}{2}%
	\sum_{j=1}^{p}e_{j}\cdot B(X,e_{j})\cdot \varphi _{i}+\frac{1}{2}%
	\textbf{i}~A^{l}(X)\cdot \varphi _{i}.  
	\end{eqnarray*}
	
	Note that the spinors $\varphi _{i}=[\tilde{s},\widetilde{[\varphi _{i}]}]\in 
	$ $\sum _{1}^{\mathbb{C}}\subset \sum\nolimits_{1}^{\mathbb{C}}\mathbb{R}^{n},$ $i=1,\cdots ,2^{k}$, are orthonormal according to the product $\left\langle \left\langle \cdot ,\cdot \right\rangle \right\rangle $%
	\begin{eqnarray*}
	\left\langle \left\langle \varphi _{i},\varphi _{j}\right\rangle
	\right\rangle  &=&\tau \left( \widetilde{[\varphi _{j}]}\right) \widetilde{%
		[\varphi _{i}]}=\tau \left( (g\otimes 1)^{-1}[\varphi _{j}]\right) (g\otimes
	1)^{-1}[\varphi _{i}]  \notag \\
	&=&\tau \left( \lbrack \varphi _{j}]\right) \tau \left( (g\otimes
	1)^{-1}\right) (g\otimes 1)^{-1}[\varphi _{i}]  \notag \\
	&=&\tau \left( \lbrack \varphi _{j}]\right) [\varphi _{i}].  \notag \\
	\left\langle \left\langle \varphi _{i},\varphi _{i}\right\rangle
	\right\rangle  &=&1;~~\left\langle \left\langle \varphi _{i},\varphi
	_{j}\right\rangle \right\rangle =0,i\neq j, ~i,j =1,\cdots ,2^{k} 
	\end{eqnarray*}
\end{proof}

\begin{remark}
Considering the canonical isomorphisms $\mathbb{C}^{2^{k}}\simeq I_{1}^{n}\simeq \cdots \simeq
	I_{2^{k}}^{n},$ for a fixed $\widetilde{s},$ we can see each $[\varphi
	_{i}]\in I_{i}^{n}$ and $\widetilde{[\varphi _{i}]}:=\rho
	_{i}^{n}(g\otimes 1)^{-1}[\varphi _{i}]\in I_{i}^{n},~i=1,\cdots
	,2^{k}$ and note that $\tilde{[\varphi ]}=\widetilde{[\varphi
		_{1}]}+\cdots +\widetilde{[\varphi _{2^{k}}]}\in Spin^{\mathbb{C}}_n.$
\end{remark}

\textbf{Case }$n=2k+1$ \textbf{odd:}

\begin{lemma} \label{spinoresidealcasoimpar}
	Given an immersion $M\hookrightarrow Q=\mathbb{R}^{n},$ if $%
	n=2k+1$ is odd, we will have $2^{k}$ classical spinors $\varphi _{i;0}\in $ $\Sigma _{1;0}^{\mathbb{C}}=\left. P_{Spin^{\mathbb{C}%
		}_n}\right\vert _{M}\times _{\rho _{1;0}^{n}}I_{1;0}^{n}$ \emph{(}comming from the restriction of an irreducible representation $\rho_{1;0}^{n}:\mathbb{C}l_{n}\rightarrow \End_{\mathbb{C}}\left(I_{1;0}^{n}\right)$%
	\emph{)}, 
orthonormal, according $\left\langle \left\langle \cdot ,\cdot \right\rangle
	\right\rangle $, which satisfies the following equation:
	\begin{equation*}
	\nabla _{X}^{\Sigma _{1;0}^{\mathbb{C}}}\varphi _{i;0}=-\frac{1}{2}%
	\sum_{j=1}^{p}e_{j}\cdot B(X,e_{j})\cdot \varphi _{i;0}+\frac{1}{2}%
	\textbf{i}~A^{l}(X)\cdot \varphi _{i;0},~i=1,\cdots ,2^{k}.
	\end{equation*}
And we will also have $ 2^k $ classical spinors $\varphi _{i;1}\in $ $\Sigma _{1;1}^{\mathbb{C}}=\left. P_{Spin^{\mathbb{C}}_n}\right\vert _{M}\times _{\rho _{1;1}^{n}}I_{1;1}^{n}$ \emph{(}comming from the restriction of an irreducible representation $\rho
	_{1;1}^{n}:\mathbb{C}l_{n}\rightarrow \End_{\mathbb{C}}\left(I_{1;1}^{n}
	\right)$\emph{)}, 
	orthonormal, according $\left\langle \left\langle \cdot ,\cdot \right\rangle
	\right\rangle $,which satisfies the following equation:
	\begin{equation*}
	\nabla _{X}^{\Sigma _{1;1}^{\mathbb{C}}}\varphi _{i;1}=-\frac{1}{2}%
	\sum_{j=1}^{p}e_{j}\cdot B(X,e_{j})\cdot \varphi _{i;1}+\frac{1}{2}%
	\textbf{i}~A^{l}(X)\cdot \varphi _{i;1},i=1,\cdots ,2^{k}.
	\end{equation*}
\end{lemma}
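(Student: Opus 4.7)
The plan is to mirror the proof of Lemma \ref{spinoresidealcasopar} (the even case) twice, once for each value of $\lambda \in \{0,1\}$, taking advantage of the decomposition $\mathbb{C}l_{2k+1} \simeq \mathbb{C}(2^k) \oplus \mathbb{C}(2^k)$ and the two corresponding families of primitive orthogonal idempotents $\mathrm{f}_{i;\lambda}$ introduced in Section~\ref{representacaoideal}. For each fixed $\lambda$, I first choose constant representatives $[\varphi_{i;\lambda}] \in I_{1;\lambda}^{n} = \mathbb{C}l_n \mathrm{f}_{1;\lambda}$, $i = 1,\ldots,2^k$, specified so that $\mathbf{i}_{2k+1}([\varphi_{i;\lambda}])$ has a $1$ in position $(i,1)$ of the $\lambda$-th summand and zeros elsewhere. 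I then define the spinor fields $\varphi_{i;\lambda} = [s,[\varphi_{i;\lambda}]] \in \Sigma_{1;\lambda}^{\mathbb{C}}$ using the global section $s:\mathbb{R}^n \to P_{Spin_n^{\mathbb{C}}}$ produced by contractibility of $\mathbb{R}^n$.

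Next I compute $\nabla^{\Sigma_{1;\lambda}^{\mathbb{C}}Q}_X \varphi_{i;\lambda}$ in the global section $s$: since $\mathbb{R}^n$ is flat, the Levi-Civita connection components $w^Q_{ij}$ vanish in the frame $h=(E_1,\ldots,E_n)$, so only the $S^1$-term survives,
\begin{equation*}
\nabla^{\Sigma_{1;\lambda}^{\mathbb{C}}Q}_X \varphi_{i;\lambda} = \left[s,\tfrac{1}{2}\mathbf{i}\,A^{l'}(X)\cdot [\varphi_{i;\lambda}]\right].
\end{equation*}
Passing to the adapted local section $\tilde{s}=s\cdot(g\otimes 1)$, the computation is identical to the even case and yields $\nabla^{\Sigma_{1;\lambda}^{\mathbb{C}}Q}_X \varphi_{i;\lambda} = \tfrac{1}{2}\mathbf{i}\,A^l(X)\cdot\varphi_{i;\lambda}$. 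Finally, applying the spinorial Gauss formula \eqref{gauss2} to pass from $\nabla^{\Sigma_{1;\lambda}^{\mathbb{C}}Q}$ to the induced connection $\nabla^{\Sigma_{1;\lambda}^{\mathbb{C}}}$ on the restricted bundle produces the claimed equation for each $i$ and each $\lambda$.

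Orthonormality with respect to $\langle\langle\cdot,\cdot\rangle\rangle$ is checked exactly as in the even case: the factor $(g\otimes 1)^{-1}$ coming from the change of frame is killed by $\tau$ because $Spin_n \subset \{g \in Cl_n^0: \bar{g}g=1\}$ and $1\in S^1$, so $\langle\langle \varphi_{i;\lambda},\varphi_{j;\lambda}\rangle\rangle = \tau([\varphi_{j;\lambda}])[\varphi_{i;\lambda}]$. Under $\mathbf{i}_{2k+1}$ this becomes the matrix product $E_{1,i}\cdot E_{j,1}$ in the $\lambda$-th summand, which equals $\delta_{ij}\mathrm{f}_{1;\lambda}$; under the isomorphism $\mathrm{f}_{1;\lambda}\mathbb{C}l_n \mathrm{f}_{1;\lambda}\simeq \mathbb{C}$ this reads $\delta_{ij}$.

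The construction is performed independently for $\lambda=0$ and $\lambda=1$; there is no obstruction coming from the two irreducible representations being non-equivalent as $\mathbb{C}l_n$-modules, because the connection, the product $\langle\langle\cdot,\cdot\rangle\rangle$, and the Gauss formula \eqref{gauss2} all act within each ideal $I_{1;\lambda}^n$ separately. The only delicate point — and therefore the place where one must be careful — is checking that the reduction to the flat connection in the global section $s$ still holds summand-wise and that $\tau$ exchanges $I_{1;\lambda}^n$ with itself (i.e., does not mix the two factors), which follows from $\tau$ being conjugate transpose on each summand $\mathbb{C}(2^k)$ via $\mathbf{i}_{2k+1}$.
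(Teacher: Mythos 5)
Your proposal is correct and follows essentially the same route as the paper: the paper's own proof simply declares the odd case ``completely analogous to the even,'' fixes the same constant elements $[\varphi_{i;\lambda}]\in I_{1;\lambda}^{n}$ (with $\mathbf{i}_{2k+1}$-image a $1$ in position $(i,1)$ of the $\lambda$-th summand), defines $\varphi_{i;\lambda}=[s,[\varphi_{i;\lambda}]]$, and invokes flatness of $\mathbb{R}^{n}$ together with the spinorial Gauss formula~\eqref{gauss2}, exactly as you do. Your added observations — that the calculation proceeds summand-wise and that $\tau$ preserves each factor $\mathbb{C}(2^{k})$ under $\mathbf{i}_{2k+1}$ — are correct and make explicit what the paper leaves implicit.
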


\begin{proof}
The case $n=2k+1$ odd is completely analogous to the even. Fix the constant elements
	\begin{eqnarray*}
		\lbrack \varphi _{i;0}] &\in &I_{1;0}^{n}=\mathbb{C}l_{n}\mathrm{f}_{1;0}\subset \mathbb{C}l_{n},\text{ }i=1,\cdots ,2^{k}, \\
		i_{2k+1}\left( [\varphi _{i;0}]\right)  &=&
%		\begin{array}{c}
%			\\ 
%			\\ 
%			i \\ 
%			\\ 
%		\end{array}%
		\left( 
		\begin{array}{ccccc}
			0 & \cdots  & 0 & \cdots  & 0 \\ 
			\vdots  & \ddots  & \vdots  &  & \vdots  \\ 
			1 & \cdots  & 0 & \cdots  & 0 \\ 
			\vdots  &  & \vdots  & \ddots  & \vdots  \\ 
			0 & \cdots  & 0 & \cdots  & 0%
		\end{array}%
		\right) \oplus \left( 
		\begin{array}{ccccc}
			0 & \cdots  & 0 & \cdots  & 0 \\ 
			\vdots  & \ddots  & \vdots  &  & \vdots  \\ 
			0 & \cdots  & 0 & \cdots  & 0 \\ 
			\vdots  &  & \vdots  & \ddots  & \vdots  \\ 
			0 & \cdots  & 0 & \cdots  & 0%
		\end{array}%
		\right) i\text{-th line},
	\end{eqnarray*}%
	\begin{eqnarray*}
		\lbrack \varphi _{i;1}] &\in &I_{1;1}^{n}=\mathbb{C}l_{n}\mathrm{f}_{1;1}\subset \mathbb{C}l_{n},\text{ }i=1,\cdots ,2^{k}, \\
		i_{2k+1}\left( [\varphi _{i;1}]\right)  &=&%
%		\begin{array}{c}
%			\\ 
%			\\ 
%			i \\ 
%			\\ 
%		\end{array}%
		\left( 
		\begin{array}{ccccc}
			0 & \cdots  & 0 & \cdots  & 0 \\ 
			\vdots  & \ddots  & \vdots  &  & \vdots  \\ 
			0 & \cdots  & 0 & \cdots  & 0 \\ 
			\vdots  &  & \vdots  & \ddots  & \vdots  \\ 
			0 & \cdots  & 0 & \cdots  & 0%
		\end{array}%
		\right) \oplus \left( 
		\begin{array}{ccccc}
			0 & \cdots  & 0 & \cdots  & 0 \\ 
			\vdots  & \ddots  & \vdots  &  & \vdots  \\ 
			1 & \cdots  & 0 & \cdots  & 0 \\ 
			\vdots  &  & \vdots  & \ddots  & \vdots  \\ 
			0 & \cdots  & 0 & \cdots  & 0%
		\end{array}%
		\right) i\text{-th line},
	\end{eqnarray*}%
	and define the following spinorial fields 
	\begin{align*}
	  & \varphi _{i;0}=[s,[\varphi _{i;0}]]\in
	\sum\nolimits_{1;0}^{\mathbb{C}}\mathbb{R}^{n}:=P_{Spin^{\mathbb{C}}_n}\times _{\rho _{1;0}^{n}}I_{1;0}^{n}, \\ &\varphi _{i;1}=[s,[\varphi _{i;1}]]\in
	\sum\nolimits_{1;1}^{\mathbb{C}}\mathbb{R}^{n}:=P_{Spin^{\mathbb{C}%
		}_n}\times _{\rho _{1;1}^{n}}I_{1;1}^{n},
	\end{align*} which in an adapted local section are written as $\varphi _{i;0}=[\tilde{s},\widetilde{[\varphi
		_{i;0}]}],$ $\varphi _{i;1}=[\tilde{s},\widetilde{[\varphi
		_{i;1}]}]$ and satisfy
	\begin{eqnarray*} \label{killingimpar1}
	\nabla _{X}^{\Sigma _{1;0}^{\mathbb{C}}}\varphi _{i;0}=-\frac{1}{2}%
	\sum_{j=1}^{p}e_{j}\cdot B(X,e_{j})\cdot \varphi _{i;0}+\frac{1}{2}%
	\textbf{i}~A^{l}(X)\cdot \varphi _{i;0},~i=1,\cdots ,2^{k}, \notag \\
	\nabla _{X}^{\Sigma _{1;1}^{\mathbb{C}}}\varphi _{i;1}=-\frac{1}{2}%
	\sum_{j=1}^{p}e_{j}\cdot B(X,e_{j})\cdot \varphi _{i;1}+\frac{1}{2}%
	\textbf{i}~A^{l}(X)\cdot \varphi _{i;1},~i=1,\cdots ,2^{k}.
	\end{eqnarray*}
	
	Note that the spinors $\varphi _{i;\lambda}=[\tilde{s},\widetilde{[\varphi _{i;\lambda}]}]\in 
	$ $\sum\nolimits_{1;\lambda}^{\mathbb{C}}\subset \sum\nolimits_{1;\lambda}^{\mathbb{C}}\mathbb{R}^{n},$ $i=1,\cdots ,2^{k} ; \lambda = 0,1$, are also orthonormal according to the product $\left\langle \left\langle \cdot ,\cdot \right\rangle
	\right\rangle .$ 
\end{proof}

\begin{remark}
Considering the canonic isomorphisms $\mathbb{C}^{2^{k}}\simeq I_{1;0}\simeq \cdots \simeq
	I_{2^{k};0}\simeq I_{1;1}\simeq \cdots \simeq I_{2^{k};1},$ fixed $\tilde{s}%
	,$ we can see each $[\varphi _{i;\lambda}]\in I_{i;\lambda}$ and $\widetilde{[\varphi _{i;\lambda}]%
	}:=\rho _{i;\lambda}(g\otimes 1)^{-1}[\varphi _{i;\lambda}]\in I_{i;\lambda},$ $i=1,\cdots ,2^{k}; \lambda=0,1$ and note that $\widetilde{[\varphi ]}=\widetilde{[\varphi
		_{1;0}]}+\cdots +\widetilde{[\varphi _{2^{k};0}]}+\widetilde{[\varphi _{1;1}]}%
	+\cdots +\widetilde{[\varphi _{2^{k};1}]}\in Spin^{\mathbb{C}}_n.$
\end{remark}

\bigskip 

\textbf{Then there is the reciprocal question: Given this set of orthonormal spinors is it possible to construct an isometric immersion of the manifold} $M$ \textbf{in} $\mathbb{R}^{n}$ \textbf{?}

\bigskip

Let $M$ a riemannian $p$-dimensional manifold, $E\rightarrow M$ a vector bundle over $\mathbb{R}$ with rank $q$, assume that $TM$ and $E$ are oriented and $Spin^{\mathbb{C}}.$ Denote again by $P_{SO(p)}$
the frame bundle of $TM$ and by $P_{SO(q)}$ the frame bundle of $E.$
Also the respective $Spin^{\mathbb{C}}$  structures are represented as
\begin{equation*}
	\Lambda ^{1\mathbb{C}} :P_{Spin_{p}^{\mathbb{C}}}\rightarrow
	P_{SO(p)}\times P_{S^{1}}^{1}, \ \ \ \
	\Lambda ^{2\mathbb{C}} :P_{Spin_{q}^{\mathbb{C}}}\rightarrow
	P_{SO(q)}\times P_{S^{1}}^{2}.
\end{equation*}%
Define here, as well as in the section
 \ref{512}, the $S^{1}$-principal bundle $P_{S^{1}}$ such as the bundle whose transition functions are defined as the product of the transition functions of $P_{S^{1}}^{1}$ and $P_{S^{1}}^{2}$. It is not difficult to see how that there is a canonical morphism between bundles: $\Phi
:P_{S^{1}}^{1}\times _{M}P_{S^{1}}^{2}\rightarrow P_{S^{1}}$ with $\Phi
(p^{1}\cdot s^{1},p^{2}\cdot s^{2})=\Phi (p^{1},p^{2})s^{1}s^{2},$ $p_{1}\in
P_{S^{1}}^{1},$ $p_{2}\in P_{S^{1}}^{2},$ $s_{1},s_{2}\in S^{1}.$

Here $\textbf{i}A^{1}:TP_{S^{1}}^{1}\rightarrow \textbf{i}\mathbb{R}$, $\textbf{i}A^{2}:TP_{S^{1}}^{2}%
\rightarrow \textbf{i}\mathbb{R}$ are arbitrary connections in $P_{S^{1}}^{1}$ and $P_{S^{1}}^{2}$. Fix the following local sections $s=(e_{1},\cdots ,e_{p}):U\rightarrow P_{SO(p)}$, $l_{1}:U\rightarrow
P_{S^{1}}^{1}$, $l_{2}:U\rightarrow P_{S^{1}}^{2},$ $l=\Phi
(l_{1},l_{2}):U\rightarrow P_{S^{1}}$. Now $\textbf{i}A:TP_{S^{1}}\rightarrow \textbf{i}%
\mathbb{R}$ is the connection defined by $\textbf{i}A(d\Phi
(l_{1},l_{2}))=\textbf{i}A_{1}(dl_{1})+\textbf{i}A_{2}(dl_{2}).$

Here we will fix the following complexed spinor bundles
\begin{eqnarray*}
	\sum\nolimits_{i}^{\mathbb{C}} &:=&\left( P_{Spin^{\mathbb{C}}_p}\times
	_{M}P_{Spin^{\mathbb{C}}_q}\right) \times _{\rho _{i}^{n}}I_{i}^{n},~%
	\text{if }n=2k\text{ is even}\\
	\sum\nolimits_{i;\lambda}^{\mathbb{C}} &:=&\left( P_{Spin^{\mathbb{C}}_p}\times
	_{M}P_{Spin^{\mathbb{C}}_q}\right) \times _{\rho _{i;\lambda}^{n}}I_{i,\lambda}^{n},~%
	\text{if }n=2k+1\text{ is odd} \\
	i &=&1,\cdots ,2^{k};\lambda=0,1.
\end{eqnarray*}

For the \textbf{case }$n=2k$\textbf{\ even}: suppose that there are $2^{k} $ orthonormal spinors $\varphi _{i}\in \Gamma \left( \Sigma _{1}^{\mathbb{C}}\right) $ satisfying the following equation
\begin{equation*}
\nabla _{X}^{\Sigma _{1}^{\mathbb{C}}}\varphi _{i}=-\frac{1}{2}%
\sum_{j=1}^{p}e_{j}\cdot B(X,e_{j})\cdot \varphi _{i}+\frac{1}{2}%
\textbf{i}~A^{l}(X)\cdot \varphi _{i},i=1,\cdots ,2^{k},  \label{killingi}
\end{equation*}%
where $B:TM\times TM\rightarrow E$ is a symmetric and bilinear form.

\begin{remark}
	Given the natural isomorphisms $\Sigma _{1}^{\mathbb{C}}\simeq
	\cdots \simeq \Sigma _{2^{k}}^{\mathbb{C}}$, we can consider each $\varphi _{i}=\left[\tilde{s},[\varphi _{i}]\right] \in \Gamma \left( \Sigma _{i}^{\mathbb{C}}\right), $ which will be solutions of 
\begin{equation*}
\nabla _{X}^{\Sigma _{i}^{\mathbb{C}}}\varphi _{i}=-\frac{1}{2}\sum_{j=1}^{p}e_{j}\cdot B(X,e_{j})\cdot \varphi _{i}+\frac{1}{2}%
\textbf{i}~A^{l}(X)\cdot \varphi _{i},i=1,\cdots ,2^{k}. 
\end{equation*}%

Without loss of generality, since we can make an adequate linear combination of eq.~(\ref{killingi}), fixed $\tilde{s}\in \Gamma \left( P_{Spin^{\mathbb{C}}(p)}\times _{M}P_{Spin^{\mathbb{C}}(q)}\right) $ we have 
\begin{equation}
\lbrack \varphi _{1}]+\cdots +[\varphi _{2^{k}}]\in Spin_{n}^{\mathbb{C}%
}\subset I_{1}^{n}\oplus \cdots \oplus I_{2^{k}}^{n}=\mathbb{C}l_{n}.
\label{somaestaemspin}
\end{equation}

Note that this does not depend on the choice of referential $ \tilde{s} $, since we are working with $Spin^{\mathbb{C}}$-principal bundles . Thus in another spinorial frame eq.~\eqref{somaestaemspin} remains valid.

\end{remark}

Then we can define the following $\mathbb{C}$-valued $1$-forms :
\begin{eqnarray}
\xi _{ij} &:&TM\rightarrow \mathrm{f}_{i}\mathbb{C}l_{n}\mathrm{f}%
_{j}\simeq \mathbb{C} \notag \\
\xi _{ij}(X) &=&\left\langle \left\langle X\cdot \varphi _{i},\varphi
_{j}\right\rangle \right\rangle ,i,j=1,\cdots ,2^{k}.
\end{eqnarray}

Now, since we are assuming that the $\varphi _{i}$ are such that
the equations \eqref{killingi} and \eqref{somaestaemspin} are valid, we define the following $\mathbb{C}l_{n}$-valued $1$-form 
\begin{equation} \label{xisoma}
\xi (X) = \sum_{i,j=1}^{2^k} \xi_{ij} (X),  \ \  \ \
\xi (X) \in \bigoplus_{i,j=1}^{2^k} \mathrm{f}_{i}\mathbb{C}l_{n}\mathrm{f}_{j} = \mathbb{C}l_{n}.
\end{equation}

\begin{lemma}
	\label{lemaxii}Suppose each $\varphi _{i}\in \Gamma \left( \Sigma
	_{1}^{\mathbb{C}}\right) \simeq \Gamma \left( \Sigma _{i}^{\mathbb{C}%
	}\right) $ satisfy eqs.~\eqref{killingi} and \eqref{somaestaemspin}, then $\xi $ defined by eq.~(\ref{xisoma}) is such that
	
	\begin{enumerate}
		\item $\xi $ is a $\mathbb{R}^{n}$-valued $1$-form. 
		
		\item $\xi $ is a closed $1$-form, $d\xi =0.$
	\end{enumerate}
\end{lemma}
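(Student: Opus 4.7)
The plan is to exploit the factorization $\xi(X) = \tau(\Psi) \cdot X \cdot \Psi$, where $\Psi := [\varphi_1] + \cdots + [\varphi_{2^k}]$ is, in a fixed spinorial frame, a section taking values in $Spin_n^{\mathbb{C}} \subset \mathbb{C}l_n$ by hypothesis~\eqref{somaestaemspin}. For Part~(1), additivity of $\tau$ gives
\begin{equation*}
\xi(X) = \sum_{i,j=1}^{2^k} \tau([\varphi_j]) \cdot X \cdot [\varphi_i] = \tau(\Psi) \cdot X \cdot \Psi,
\end{equation*}
and since $\tau(g)g = 1$ for $g \in Spin_n$ and $\tau(z)z = |z|^2 = 1$ for $z \in S^1$, we have $\tau(\Psi)\Psi = 1$. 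Therefore $\xi(X) = \Psi^{-1} \cdot X \cdot \Psi = \lambda_n^{\mathbb{C}}(\Psi^{-1})(X) \in \mathbb{R}^n$ by the very definition of the adjoint action of $Spin_n^{\mathbb{C}}$ on $\mathbb{R}^n$.

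For Part~(2), I would compute $d\xi(X,Y) = X(\xi(Y)) - Y(\xi(X)) - \xi([X,Y])$ summand by summand. Using compatibility of $\nabla^{\Sigma^{\mathbb{C}}Q}$ with $\langle\langle \cdot,\cdot \rangle\rangle$, parallelism of Clifford multiplication, and torsion-freeness of $\nabla^M$, the $[X,Y]$ term cancels and $d\xi_{ij}(X,Y)$ reduces to an expression involving $\nabla^{\Sigma}_X \varphi_i$, $\nabla^{\Sigma}_Y \varphi_i$, $\nabla^{\Sigma}_X \varphi_j$, and $\nabla^{\Sigma}_Y \varphi_j$. Substituting the Killing-type equation $\nabla^{\Sigma}_X \varphi_i = A(X) \cdot \varphi_i$, where
\begin{equation*}
A(X) = -\tfrac{1}{2}\sum_{k=1}^{p} e_k \cdot B(X,e_k) + \tfrac{1}{2}\mathbf{i}\, A^l(X),
\end{equation*}
and verifying that $\tau(A(X)) = -A(X)$ (using $\tau(\mathbf{i}) = -\mathbf{i}$ and the tangent--normal anticommutation for each $e_k \cdot B(X,e_k)$), I would apply the identity $\langle\langle a \cdot \psi, \varphi \rangle\rangle = \langle\langle \psi, \tau(a) \cdot \varphi \rangle\rangle$ to consolidate the four terms into
\begin{equation*}
d\xi_{ij}(X,Y) = \langle\langle \bigl([Y, A(X)] - [X, A(Y)]\bigr) \cdot \varphi_i, \varphi_j \rangle\rangle.
\end{equation*}

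The crux---and the main obstacle---is to establish the Clifford-algebraic identity $[Y, A(X)] = B(X,Y)$ in $\mathbb{C}l_n$. The scalar term $\tfrac{1}{2}\mathbf{i}\, A^l(X)$ is central, so the $U(1)$-connection contribution drops out automatically, which is reassuring. For the remaining piece, I would apply $Y e_k + e_k Y = -2\langle Y,e_k\rangle$ together with the tangent--normal anticommutation $Y \cdot B(X,e_k) + B(X,e_k) \cdot Y = 0$ to obtain $[Y, e_k \cdot B(X,e_k)] = -2\langle Y, e_k\rangle\, B(X,e_k)$; summing over $k$ yields $[Y, A(X)] = \sum_k \langle Y, e_k\rangle\, B(X,e_k) = B(X,Y)$. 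Symmetry of $B$ then gives $[X, A(Y)] = B(Y,X) = B(X,Y)$, so the commutator difference vanishes and $d\xi_{ij} = 0$ for all $i,j$, whence $d\xi = 0$.
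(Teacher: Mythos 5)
Your proof is correct. For Part~(1) your argument coincides with the paper's: the double sum telescopes to $\tau[\varphi]\,[X]\,[\varphi]$ with $[\varphi]=\sum_i[\varphi_i]\in Spin_n^{\mathbb{C}}$, and the result lies in $\mathbb{R}^n$ because it is the twisted adjoint action of $[\varphi]^{-1}=\tau[\varphi]$ on the vector $[X]$; your explicit remark that $\tau(\Psi)\Psi=1$ is a useful sharpening of why $\tau[\varphi]=[\varphi]^{-1}$. For Part~(2) you take a genuinely different, leaner route. The paper reduces, via compatibility of the connection and the Killing-type equation, to $d\xi(X,Y)=\sum_{i,j}(\mathrm{id}-\tau)\langle\langle\varphi_i,C\cdot\varphi_j\rangle\rangle$, then re-collapses the double sum to $(\mathrm{id}-\tau)(\tau[\varphi]\,\tau[C]\,[\varphi])$ and finishes with a term-by-term check that $\tau[C]=[C]$; this argument only establishes vanishing of the \emph{total} $d\xi$, since the $(\mathrm{id}-\tau)$ rewriting relies on an $i\leftrightarrow j$ relabeling inside the double sum. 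You instead kill each summand $d\xi_{ij}$ outright by reducing it to $\langle\langle([Y,A(X)]-[X,A(Y)])\cdot\varphi_i,\varphi_j\rangle\rangle$ with $A(X)=-\tfrac{1}{2}\sum_k e_k\cdot B(X,e_k)+\tfrac{1}{2}\mathbf{i}\,A^l(X)$, and then proving the Clifford commutator identity $[Y,A(X)]=B(X,Y)$ from $Ye_k+e_kY=-2\langle Y,e_k\rangle$ and the tangent--normal anticommutation; symmetry of $B$ gives $[Y,A(X)]=[X,A(Y)]$, the central scalar $\tfrac{1}{2}\mathbf{i}A^l(X)$ drops out of the bracket automatically, and each $d\xi_{ij}$ vanishes. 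This buys you a more structural proof, avoids the $\tau[C]=[C]$ bookkeeping, and exposes a stronger fact the paper leaves implicit: each individual $\xi_{ij}$ is closed as a consequence of the Killing equation alone, independently of the hypothesis $\sum_i[\varphi_i]\in Spin_n^{\mathbb{C}}$, which your argument shows is needed only for Part~(1).
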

		
		\begin{proof}
			\begin{enumerate}
				\item If $\varphi =[\tilde{s},[\varphi_i ]],X=[\tilde{s},[X]],$ where $[\varphi_i ]$ and $[X]$
				represent $\varphi_i $ in a given frame $\tilde{s}\in \Gamma
				\left( P_{Spin^{\mathbb{C}}_p}\times P_{Spin^{\mathbb{C}}_q}\right),~i=1,\cdots , 2^{k},$%
				\begin{eqnarray*}
					 \xi (X) && :=\sum_{i,j=1}^{2^k} \xi_{ij} (X)= \sum_{i,j=1}^{2^k} \tau \lbrack \varphi_j ][X][\varphi_i ] = \left( \sum_{j=1}^{2^k}  \tau \lbrack \varphi_j ] \right) [X] \left( \sum_{i=1}^{2^k} [\varphi_i ] \right) \\ && = \tau \lbrack \varphi ][X][\varphi ]\in \mathbb{R}^{n}\subset
				Cl_{n}\subset \mathbb{C}l_{n}, \text{ since } [\varphi ]\in Spin^{\mathbb{C}}.
				\end{eqnarray*}
				
				\item For simplicity suppose that at the arbitrary point $x_{0}\in M$ have $\nabla
				^{M}X=\nabla ^{M}Y=0,$ and write $\nabla _{X}^{\Sigma ^{\mathbb{C}%
				}}\varphi =\nabla _{X}\varphi $ and $\nabla ^{M}X=\nabla X$,%
				\begin{eqnarray*}
					X(\xi (Y)) &=& X\bigg(\sum_{i,j=1}^{2^k} \xi_{ij} (Y)\bigg)  =\sum_{i,j=1}^{2^k} \bigg( \left\langle \left\langle Y\cdot \nabla _{X}\varphi_i ,\varphi_j
					\right\rangle \right\rangle +\left\langle \left\langle Y\cdot \varphi_i
					,\nabla _{X}\varphi_j \right\rangle \right\rangle \bigg) \\ &=&\sum_{i,j=1}^{2^k}(id-\tau )\left\langle
					\left\langle Y\cdot \varphi_i ,\nabla _{X}\varphi_j \right\rangle \right\rangle
					 \\
					 &=&\sum_{i,j=1}^{2^k}(id-\tau )\left\langle \left\langle \varphi_i ,\frac{1}{2}
					\sum_{r=1}^{p}Y\cdot e_{r}\cdot B(X,e_{r})\cdot \varphi_j  -\frac{1}{2}\mathbf{i}~A^{l}(X) Y\cdot \varphi_j \right\rangle \right\rangle ,
					\\
					Y(\xi (X)) &=&\sum_{i,j=1}^{2^k}(id-\tau )\left\langle \left\langle \varphi_i ,\frac{1}{2}
					\sum_{r=1}^{p}X\cdot e_{r}\cdot B(Y,e_{r})\cdot \varphi_j  -\frac{1}{2}\mathbf{i}~A^{l}(Y)X\cdot \varphi_j \right\rangle \right\rangle . 
				\end{eqnarray*}
				From here it follows that
				\begin{eqnarray}
					d\xi (X,Y) &=& X(\xi (Y))-Y(\xi (X)) \notag \\
					&=&\sum_{i,j=1}^{2^k}(id-\tau )\left\langle \left\langle \varphi_i ,\frac{1}{2}%
					\sum_{r=1}^{p}\left[ Y\cdot e_{r}\cdot B(X,e_{r})\right.\right. \right. \notag \\ && \left. \left. \left. -X\cdot
					e_{r}\cdot B(Y,e_{r})\right] \cdot \varphi_j  +\frac{1}{2}\mathbf{i}\left(
					A^{l}(Y)X-A^{l}(X)Y\right) \cdot \varphi_j \right\rangle \right\rangle \notag \\
					&=&\sum_{i,j=1}^{2^k}(id-\tau )\left\langle \left\langle \varphi _i,C\cdot \varphi_j \right\rangle
					\right\rangle , \notag
				\end{eqnarray}
				$$C:=\frac{1}{2}\sum_{r=1}^{p}\left[ Y\cdot e_{r}\cdot B(X,e_{r})-X\cdot
				e_{r}\cdot B(Y,e_{r})\right] +\frac{1}{2}\mathbf{i} \big( A^{l}(Y)X-A^{l}(X)Y \big).$$
				Write $X=\sum_{s=1}^{p}x^{s}e_{s};~Y=\sum_{s=1}^{p}y^{s}e_{s}$ then
				\begin{eqnarray}
					\sum_{r=1}^{p}X\cdot e_{r}\cdot B(Y,e_{r})
					&=&\sum_{r=1}^{p}\sum_{s=1}^{p}x^{s}e_{s}\cdot
					e_{r}\cdot B(Y,e_{r}) \notag \\ &=& -B(Y,X)+\sum_{r=1}^{p}\sum
					_{\substack{ s=1  \\ s\neq r}}^{p}x^{s}e_{s}\cdot e_{r}\cdot B(Y,e_{r}), \notag \\
					\sum_{r=1}^{p}Y\cdot e_{r}\cdot B(X,e_{r})	&=&\sum_{r=1}^{p}\sum_{s=1}^{p}y^{s}e_{s}\cdot
					e_{r}\cdot B(X,e_{r}) \notag \\
					&=&-B(X,Y)+\sum_{r=1}^{p}\sum
					_{\substack{ s=1  \\ s\neq r}}^{p}y^{s}e_{s}\cdot e_{r}\cdot B(X,e_{r}), \notag
				\end{eqnarray}
				from which we conclude
				\begin{eqnarray}
					C &=&\frac{1}{2}\left[ \sum_{r=1}^{p}\sum_{\substack{ 
							s=1  \\ s\neq r}}^{p}e_{s}\cdot e_{r}\cdot \left[
					y^{s}B(X,e_{r})-x^{s}B(Y,e_{r})\right] \notag \right]  +\frac{\mathbf{i}}{2}\bigg( A^{l}(Y)X-A^{l}(X)Y \bigg)
					\end{eqnarray}
					\begin{eqnarray*}
					&&\tau ([C]) -\frac{1}{2}\left[ \sum_{r=1}^{p}\sum
					_{\substack{ s=1  \\ s\neq r}}^{p}\left[ y^{s}B(X,e_{r})-x^{s}B(Y,e_{r})%
					\right] \right] \cdot e_{r}\cdot e_{s} \notag  +\frac{\mathbf{i}}{2}\bigg( A^{l}(Y)\left[ X\right]
					-A^{l}(X)\left[ Y\right] \bigg) \notag \\
					&&=\frac{1}{2}\left[ \sum_{r=1}^{p}\sum_{\substack{ s=1 
							\\ s\neq r}}^{p}e_{s}\cdot e_{r}\cdot \left[ y^{s}B(X,e_{r})-x^{s}B(Y,e_{r})
					\right] \right]  + \frac{\mathbf{i}}{2}\bigg( A^{l}(Y)\left[ X\right] -A^{l}(X)\left[ Y\right]
					\bigg)=[C].
				\end{eqnarray*}
			What implies that
				\begin{eqnarray}
				d\xi (X,Y)&=&\sum_{i,j=1}^{2^k}(id-\tau )\left\langle \left\langle \varphi_i ,C\cdot \varphi_j
				\right\rangle \right\rangle =\sum_{i,j=1}^{2^k}(id-\tau )(\tau \lbrack \varphi_j ]\tau \lbrack
				C][\varphi_i ])\notag \\&=&(id-\tau )\bigg(\sum_{j=1}^{2^k}\tau \lbrack \varphi_j ]\tau \lbrack
				C]\sum_{i=1}^{2^k}[\varphi_i ]\bigg)
				\notag =(id-\tau )(\tau \lbrack \varphi ]\tau \lbrack
				C][\varphi ])=0. 
				\end{eqnarray}
			\end{enumerate}
		\end{proof}

\textbf{For the case }$n=2k+1$\textbf{ odd}: suppose now that there are $2^{k+1} $ spinors $\varphi _{i;0}\in \Gamma \left( \Sigma _{1;0}^{\mathbb{C}}\right) $, $\varphi _{i;1}\in \Gamma \left( \Sigma _{1;1}^{\mathbb{C}}\right) $ which satisfy the following equations
\begin{eqnarray} \label{killingi2}
\nabla _{X}^{\Sigma _{1;0}^{\mathbb{C}}}\varphi _{i;0}=-\frac{1}{2}\sum_{j=1}^{p}e_{j}\cdot B(X,e_{j})\cdot \varphi _{i;0}+\frac{1}{2}\textbf{i}~A^{l}(X)\cdot \varphi _{i;0},~i=1,\cdots ,2^{k},  \notag \\
\nabla _{X}^{\Sigma _{1;1}^{\mathbb{C}}}\varphi _{i;1}=-\frac{1}{2}\sum_{j=1}^{p}e_{j}\cdot B(X,e_{j})\cdot \varphi _{i;1}+\frac{1}{2}%
\textbf{i}~A^{l}(X)\cdot \varphi _{i;1},~i=1,\cdots ,2^{k},
\end{eqnarray}%
where $B:TM\times TM\rightarrow E$ is symmetric bilinear form.

\begin{remark}
Given the natural isomorphisms $\Sigma _{1;0}^{\mathbb{C}}\simeq\cdots \simeq \Sigma _{2^{k};0}^{\mathbb{C}}$ and $\Sigma _{1;1}^{\mathbb{C}}\simeq\cdots \simeq \Sigma _{2^{k};1}^{\mathbb{C}}$ we can consider each $\varphi _{i;\lambda}=\left[ \tilde{s},[\varphi _{i;\lambda}]	\right] \in \Gamma \left( \Sigma _{i;\lambda}^{\mathbb{C}}\right),$ which will be solutions of
\begin{equation*} 
\nabla _{X}^{\Sigma _{i;\lambda}^{\mathbb{C}}}\varphi _{i;\lambda}=-\frac{1}{2}\sum_{j=1}^{n}e_{j}\cdot B(X,e_{j})\cdot \varphi _{i;\lambda}+\frac{1}{2}\textbf{i}~A^{l}(X)\cdot \varphi _{i;\lambda},i=1,\cdots ,2^{k},\lambda=0,1.  
\end{equation*}

Without loss of generality, since we can make an adequate linear combination of eq.~(\ref{killingi2}), fixed $\tilde{s}\in \Gamma \left( P_{Spin^{\mathbb{C}}_p}\times _{M}P_{Spin^{\mathbb{C}}_q}\right) $ we have
\begin{align} 
&\lbrack \varphi _{1;0} \rbrack +\cdots +\lbrack \varphi _{2^{k};0} \rbrack + \lbrack \varphi _{1;1} \rbrack +\cdots +\lbrack \varphi _{2^{k};1} \rbrack \in Spin_{n}^{\mathbb{C}} \notag \\ &\subset \left( I_{1;1}\oplus \cdots \oplus I_{2^{k};1}\right) \oplus \left( I_{1;2}\oplus \cdots \oplus I_{2^{k};2} \right)=\mathbb{C}l_{n}.
\label{somaestaemspin2}
\end{align}

Note that this does not depend on the choice of referential $ \tilde{s} $, since we are working with $Spin^{\mathbb{C}}$-principal bundles . Thus in another spinorial frame eq.~\eqref{somaestaemspin2} remains valid.
\end{remark}

We can thus define the following $\mathbb{C}$-valued $1$-forms :
\begin{eqnarray*}
\xi _{ij;\lambda} &:&TM\rightarrow \mathrm{f}_{i;\lambda}\mathbb{C}l_{(n+m)}\mathrm{f}%
_{j;\lambda}\simeq \mathbb{C} \notag \\
\xi _{ij;\lambda}(X) &=&\left\langle \left\langle X\cdot \varphi _{i;\lambda},\varphi
_{j;\lambda}\right\rangle \right\rangle ,i,j=1,\cdots ,2^{k},\lambda=0,1.
\end{eqnarray*}

Now, since we are assuming that the $\varphi _{i}$ are such that
the equations \eqref{killingi2} and \eqref{somaestaemspin2} are valid, we define the following $\mathbb{C}l_{n}$-valued $1$-form 
\begin{eqnarray*} \label{xisoma2}
\xi (X) = \sum_{i,j=1}^{2^k} \sum_{\lambda=0} ^1 \xi_{ij;\lambda} (X), \ \ \  \
\xi (X) \in \bigoplus_{i,j=1}^{2^k} \bigoplus_{\lambda=0}^1 \mathrm{f}_{i;\lambda}\mathbb{C}l_{n}\mathrm{f}_{j;\lambda} = \mathbb{C}l_{n}.
\end{eqnarray*}

\begin{lemma}
	\label{lemaxii2}Suppose each $\varphi _{i;0}\in \Gamma \left( \Sigma
	_{1;0}^{\mathbb{C}}\right) \simeq \Gamma \left( \Sigma _{i;0}^{\mathbb{C}}\right) $ and $\varphi _{i;1}\in \Gamma \left( \Sigma
	_{1;1}^{\mathbb{C}}\right) \simeq \Gamma \left( \Sigma _{i;1}^{\mathbb{C}}\right) $ satisfy eqs.~\eqref{killingi2} and \eqref{somaestaemspin2}, then $\xi $ defined by eq.~(\ref{xisoma2}) is such that
	
	\begin{enumerate}
		\item $\xi $ is a $\mathbb{R}^{n}$-valued $1$-form. 
		
		\item $\xi $ is a closed $1$-form, $d\xi =0.$
	\end{enumerate}
\end{lemma}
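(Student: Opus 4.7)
The plan is to adapt the proof of Lemma \ref{lemaxii} to the odd case, with the only new wrinkle being that we must handle the extra $\lambda\in\{0,1\}$ index coming from the two inequivalent irreducible representations of $\mathbb{C}l_{2k+1}$. The basic flow — rewrite $\xi(X)$ as $\tau[\varphi][X][\varphi]$ with $[\varphi]\in Spin_n^{\mathbb{C}}$ to get part~(1), then differentiate and use compatibility to get part~(2) — is exactly the same.

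For part~(1), I would write
\[
\xi(X)=\sum_{i,j=1}^{2^k}\sum_{\lambda=0}^{1}\tau[\varphi_{j;\lambda}]\,[X]\,[\varphi_{i;\lambda}],
\]
and enlarge the sum to include the cross terms with different $\lambda$ on the left and right. The key observation is that these cross terms vanish: writing the central decomposition $\mathbb{C}l_{2k+1}=\mathbb{C}l_{2k+1}^{(0)}\oplus\mathbb{C}l_{2k+1}^{(1)}$ into two two-sided ideals, the minimal left ideal $I_{i;\lambda}=\mathbb{C}l_{2k+1}\mathrm{f}_{i;\lambda}$ lies in the $\lambda$-summand; since each summand is a two-sided ideal, $[X]\cdot[\varphi_{i;\lambda_1}]\in\mathbb{C}l_{2k+1}^{(\lambda_1)}$, while $\tau[\varphi_{j;\lambda_2}]\in\mathbb{C}l_{2k+1}^{(\lambda_2)}$ (because $\tau$ preserves the central decomposition), so the product vanishes when $\lambda_1\neq\lambda_2$. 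Freely adding these zero terms gives $\xi(X)=\tau[\varphi]\,[X]\,[\varphi]$ where $[\varphi]=\sum_{i,\lambda}[\varphi_{i;\lambda}]\in Spin_n^{\mathbb{C}}$ by eq.~\eqref{somaestaemspin2}. Since $\tau(g\otimes s)(g\otimes s)=|s|^2\tau(g)g=1$ for $(g\otimes s)\in Spin_n^{\mathbb{C}}$, we conclude $\xi(X)=\lambda_n^{\mathbb{C}}([\varphi]^{-1})(X)\in\mathbb{R}^n$.

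For part~(2), I would mimic the even-case calculation verbatim. Working at a point where $\nabla^M X=\nabla^M Y=0$ and using the compatibility of $\nabla^{\Sigma^{\mathbb{C}}}$ with $\langle\langle\cdot,\cdot\rangle\rangle$ together with equations~\eqref{killingi2}, one obtains
\[
d\xi(X,Y)=\sum_{i,j=1}^{2^k}\sum_{\lambda=0}^{1}(id-\tau)\bigl\langle\bigl\langle\varphi_{i;\lambda},\,C\cdot\varphi_{j;\lambda}\bigr\rangle\bigr\rangle,
\]
with the \emph{same} element $C$ appearing in the proof of Lemma \ref{lemaxii}. Pulling the sums inside the pairing and invoking the same cross-term cancellation as in part~(1), this collapses to $(id-\tau)\bigl(\tau[\varphi]\,\tau[C]\,[\varphi]\bigr)$. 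The computation in the even case already showed $\tau[C]=[C]$, hence $\tau[\varphi]\,\tau[C]\,[\varphi]$ is $\tau$-invariant and $d\xi(X,Y)=0$.

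The only genuinely new point compared to Lemma \ref{lemaxii} is the cross-term vanishing based on the two-sided ideal decomposition of $\mathbb{C}l_{2k+1}$; I expect this to be the main (mild) obstacle, as it requires verifying that $\tau$ respects the splitting and that vectors act within each summand. Once this is in place, both assertions reduce mechanically to the even-case arguments.
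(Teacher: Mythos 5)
Your proposal is correct and follows the same overall strategy as the paper: the paper's proof of Lemma~\ref{lemaxii2} is essentially just a pointer back to the even case (``the proof is exactly the same as the lemma~\ref{lemaxii}''), and you carry out that adaptation. The one genuinely new ingredient you identify --- that the sum $\sum_{i,j,\lambda}\tau[\varphi_{j;\lambda}][X][\varphi_{i;\lambda}]$ can be written as $\tau[\varphi][X][\varphi]$ only after checking that the mixed-$\lambda$ cross terms vanish --- is a real gap in the paper's argument, which silently assumes the factorization $\bigl(\sum\tau[\varphi_{j;\lambda}]\bigr)[X]\bigl(\sum[\varphi_{i;\lambda}]\bigr)$ goes through exactly as in the even case, even though the paper's defining sum for $\xi$ contains only the $\lambda_1=\lambda_2$ terms. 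Your justification via the central decomposition $\mathbb{C}l_{2k+1}=\mathbb{C}l_{2k+1}^{(0)}\oplus\mathbb{C}l_{2k+1}^{(1)}$ into two-sided ideals, combined with the observation that $\tau$ fixes the volume-type central element and hence respects the splitting, is correct and is precisely what is needed; the same observation then disposes of the analogous cross terms with $[C]$ in place of $[X]$ in the closedness computation. In short, your proof matches the paper's intended route while supplying a nontrivial step the paper leaves to the reader.
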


\begin{proof}
The proof is exactly the same as the lemma \ref{lemaxii}. Equation \eqref{somaestaemspin2} implies that $\xi$ is a $\mathbb{R}^{n}$-valued form. To show that $\xi$ is a closed form we use that each $\varphi_{i;\lambda}$ satisfy eq.~\eqref{killingi2}.
\end{proof}

\bigskip

Thus, regardless of the parity of $ n $, if we assume that the $ M $ is simply connected, by the Poincaré`s lemma follows that there is a function
\begin{equation*}
F:M\rightarrow \mathbb{R}^{n},
\end{equation*}%
such that $dF=\xi .$ 

In addition the following lemma is valid

\begin{lemma} \label{lemaimersaoo} With the above considerations the following items are valid
	\begin{enumerate}
		\item The map $F:M\rightarrow \mathbb{R}%
		^{n+m}$ is an isometry.
		
		\item The map
		\begin{eqnarray*}
			\Phi _{E} &:&E\rightarrow M\times \mathbb{R}^{n} \\
			X &\in &E_{m}\mapsto (F(m),\xi (X))
		\end{eqnarray*}%
		is an isometry between $E$ and the normal bundle of $F(M)$ in $\mathbb{R}^{ n },$ preserving the connection and second fundamental form.
	\end{enumerate}
\end{lemma}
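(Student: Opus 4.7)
The plan is to unify both items of the lemma by exploiting the crucial remarks that $[\varphi]:=\sum_i[\varphi_i]$ (or $\sum_{i,\lambda}[\varphi_{i;\lambda}]$ in the odd case) lies in $Spin^{\mathbb{C}}_n\subset \mathbb{C}l_n$, combined with the identity $\xi(X)=\tau[\varphi]\,[X]\,[\varphi]$ already established in the proof of Lemma~\ref{lemaxii}. The formula $\xi(X)=\langle\langle X\cdot\varphi,\varphi\rangle\rangle$ extends verbatim from $X\in T_mM$ to $X\in E_m$, since Clifford multiplication is defined on all of $\mathbb{R}^n=\mathbb{R}^p\oplus\mathbb{R}^q\subset\mathbb{C}l_n$. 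Because $[\varphi]^{-1}=\tau[\varphi]$ in $Spin^{\mathbb{C}}_n$, this gives the structural identity
\[
\xi(X)=\tau[\varphi]\,[X]\,[\varphi]=\lambda^{\mathbb{C}}_n\!\left([\varphi]^{-1}\right)(X),
\]
so pointwise $\xi$ is an orthogonal transformation of $T_mM\oplus E_m\cong\mathbb{R}^n$.

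Item $(1)$ then follows at once: $dF=\xi\big|_{TM}$ is the restriction to a subspace of an orthogonal map into $\mathbb{R}^n$, hence isometric. Moreover, since $\xi$ is orthogonal, $\xi(E_m)$ is the orthogonal complement in $\mathbb{R}^n$ of $\xi(T_mM)=T_{F(m)}F(M)$; this is precisely the normal space to the immersion at $F(m)$, so $\Phi_E$ takes values in the normal bundle of $F(M)$ and is a fiberwise linear isometry, yielding the isometry portion of item $(2)$.

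The remaining content of $(2)$ is the preservation of the normal connection and of the second fundamental form. I would prove this by differentiating $\xi(\nu)$ in the ambient $\mathbb{R}^n$ for a local section $\nu\in\Gamma(E)$, using the Leibniz rule and substituting for $X([\varphi])$ from the Killing-type equations \eqref{killingi} (resp.~\eqref{killingi2}) in a chosen adapted frame $\tilde{s}$; the computation parallels the one used in Lemma~\ref{lemaxii} to establish $d\xi=0$. After substitution, $X(\xi(\nu))$ splits as a part lying in $\xi(T_mM)$ and a part lying in $\xi(E_m)$, matching the Gauss--Weingarten decomposition of the derivative of a normal field of $F(M)$ in $\mathbb{R}^n$. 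The tangential component should be identified with $-\Phi_E$ applied to the shape operator associated with $B$ at $\nu$, while the normal component should be identified with $\Phi_E(\nabla^E_X\nu)$, giving both preservation statements at once.

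The main obstacle is the bookkeeping in this last step: one has to track how the two contributions to $\nabla^{\Sigma}_X\varphi_i$, namely $-\tfrac12\sum_j e_j\cdot B(X,e_j)\cdot\varphi_i$ and $\tfrac12\mathbf{i}\,A^l(X)\cdot\varphi_i$, behave after pre- and post-multiplication by $\tau[\varphi]$ and $[\varphi]$. The $\mathbf{i}\,A^l(X)$ term must cancel (it commutes with $[\nu]$ and the two $[\varphi]$-factors produce opposite contributions via $\tau[\varphi][\varphi]=1$), while the $B$-term, after being conjugated by $[\varphi]$, must reproduce exactly the ambient Gauss--Weingarten splitting under the identification of $T_mM\oplus E_m$ with $T_{F(m)}F(M)\oplus N_{F(m)}F(M)$ induced by $\xi$. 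Once this is verified, both preservation statements follow simultaneously from the single orthogonal decomposition.
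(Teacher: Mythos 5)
Your approach to item (1) is correct and is a cleaner conceptual repackaging of what the paper does: rather than computing $\left\langle \xi(X),\xi(Y)\right\rangle=-\tfrac12\big(\xi(X)\xi(Y)-\xi(Y)\xi(X)\big)=\tau[\varphi]\left\langle X,Y\right\rangle[\varphi]=\left\langle X,Y\right\rangle$ directly, you recognize at once that $\xi(X)=\tau[\varphi]\,[X]\,[\varphi]=\lambda_n^{\mathbb C}\bigl([\varphi]^{-1}\bigr)(X)$ is the image of $X$ under an element of $SO_n$, so $\xi$ is pointwise orthogonal and everything about metrics — the isometry of $F$ and of $\Phi_E$, and the identification of $\xi(E_m)$ with the normal space — falls out in one stroke. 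The two arguments are computationally equivalent (both hinge on $\tau[\varphi][\varphi]=1$ and the conjugation form of $\xi$), but yours makes the geometric content of eq.~\eqref{somaestaemspin} explicit.

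For item (2) your sketch is essentially the paper's proof. The paper carries out both verifications $\xi(B(X,Y))=B_F(\xi(X),\xi(Y))$ and $\xi(\nabla'_X\eta)=\nabla'^F_{\xi(X)}\xi(\eta)$ by differentiating $\xi$ in the ambient $\mathbb R^n$, inserting the Killing-type equation for $\nabla^{\Sigma^{\mathbb C}}_X\varphi_i$, and splitting into tangential and normal parts. Your identification of the two cancellations that make this go through is correct: the $\mathbf{i}\,A^l(X)$ term from $\nabla_X\varphi_i$ and the one from $\nabla_X\varphi_j$ produce opposite contributions after applying $\tau$ (since $\overline{\mathbf{i}A^l(X)}=-\mathbf{i}A^l(X)$), which is exactly the mechanism the paper exploits in its $(\mathrm{id}-\tau)$ computation. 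The only thing you have not done is carry out the bookkeeping on the $B$-term, in particular showing that the residual part of $\left\langle\left\langle \eta\cdot\nabla_X\varphi_i,\varphi_j\right\rangle\right\rangle+\left\langle\left\langle\eta\cdot\varphi_i,\nabla_X\varphi_j\right\rangle\right\rangle$ lands in $\xi(TM)=TF(M)$ and therefore dies under the normal projection $(\cdot)^\perp$; the paper verifies this explicitly. Your plan is sound and, if executed, would reproduce the paper's argument.
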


\begin{proof}
			\begin{enumerate}
				\item Let $X,Y\in \Gamma (TM\oplus E),$ consequently
				\begin{eqnarray*}
					&&\left\langle \xi (X),\xi (Y)\right\rangle =-\frac{1}{2}\big( \xi (X)\xi
					(Y)-\xi (Y)\xi (X)\big)  \\ &&=-\frac{1}{2}  \bigg( \sum_{i,j=1}^{2^k}\xi_{ij} (X) \sum_{\alpha,\beta=1}^{2^k} \xi_{\alpha\beta}(Y) -\sum_{\alpha,\beta=1}^{2^k}\xi_{\alpha\beta} (Y) \sum_{i,j=1}^{2^k} \xi_{ij} (X)\bigg) \notag \\
					&&=-\frac{1}{2}   \bigg( \sum_{i,j=1}^{2^k}\tau \lbrack \varphi_j
					][X][\varphi_i ]\sum_{\alpha,\beta=1}^{2^k}\tau \lbrack \varphi_\beta ][Y][\varphi_\alpha ]-\sum_{\alpha,\beta=1}^{2^k}\tau \lbrack \varphi_\beta
					][Y][\varphi_\alpha ] \sum_{i,j=1}^{2^k}\tau \lbrack \varphi_j ][X][\varphi_i ]\bigg)\\
					&&=-\frac{1}{2}   \bigg( \sum_{j=1}^{2^k}\tau \lbrack \varphi_j
					][X]\sum_{i=1}^{2^k}[\varphi_i ]\sum_{\beta=1}^{2^k}\tau \lbrack \varphi_\beta ][Y]\sum_{\alpha=1}^{2^k}[\varphi_\alpha ] -\sum_{\beta=1}^{2^k}\tau \lbrack \varphi_\beta	][Y]\sum_{\alpha=1}^{2^k}[\varphi_\alpha ] \sum_{j=1}^{2^k}\tau \lbrack \varphi_j ][X]\sum_{i=1}^{2^k}[\varphi_i ]\bigg) \\
					&& =  -\frac{1}{2}\tau \lbrack \varphi ]\Big( [X]\tau \lbrack \varphi ][\varphi][Y]-[Y]\tau \lbrack \varphi ][\varphi][X]\Big) [\varphi
					]  = \tau \lbrack \varphi ] \left\langle X,Y\right\rangle
					[\varphi ] \\ 
					&&= -\frac{1}{2}\tau \lbrack \varphi ]\Big( [X][Y]-[Y][X]\Big) [\varphi
					] \notag =\tau \lbrack \varphi ] \left\langle X,Y\right\rangle 
					[\varphi ] =\left\langle X,Y\right\rangle \tau \lbrack \varphi ][\varphi
					]=\left\langle X,Y\right\rangle .
				\end{eqnarray*}
				This implies that $F$ is an isometry with its image, and that $\Phi _{E}$ is a bundle map between $E$ and the normal bundle of $F(M)$ in $\mathbb{R}^{n}$ which preserves the metric.
				
				\item Denote by $B_{F}$ and $\nabla ^{\prime F}$ the second fundamental form and the normal connection of immersion $F$ respectively. We would like to show that:
				\begin{equation*}
					\mathbf{i)}\xi (B(X,Y)) =B_{F}(\xi (X),\xi (Y)), 
				\ \ \ \ \ \ \ \ 	\mathbf{ii)}\xi (\nabla _{X}^{\prime }\eta ) =\nabla _{\xi (X)}^{\prime F}\xi
					(\eta ),
				\end{equation*}
				for all $X,Y\in \Gamma (TM)$ and $\eta \in \Gamma (E)$.
				\\
				$\mathbf{i)}$ First note that:
				\begin{equation*}
				B^{F}(\xi (X),\xi (Y)):=\{\nabla _{\xi (X)}^{F}\xi (Y)\}^{\bot }=\{X(\xi
				(Y))\}^{\bot }, 
				\end{equation*}
 				where the symbol $\bot $ means that we are considering the vector component that is orthonormal to the immersion. We know that
				\begin{multline*}
				X(\xi (Y))=\sum_{i,j=1}^{2^k} (id-\tau )\left\langle \left\langle \varphi_i,\frac{1}{2}\sum_{r=1}^{p}Y\cdot e_{r}\cdot B(X,e_{r})\cdot \varphi_j -\frac{1}{2}\mathbf{i} ~A^{l}(X)Y\cdot \varphi_j \right\rangle \right\rangle  
				\\ \shoveleft{= \sum_{i,j=1}^{2^k} (id-\tau )\left\langle \left\langle \varphi_i ,\frac{1}{2} \left( \sum_{r=1}^{p}\sum_{s=1}^{s}y^{s}e_{s}\cdot e_{r}\cdot
				B(X,e_{r})- \mathbf{i}~A^{l}(X)Y \right) \cdot \varphi_j \right\rangle \right\rangle }  \\
				\shoveleft{=\sum_{i,j=1}^{2^k} (id-\tau )\left\langle \left\langle \varphi_i ,\frac{1}{2} \left(
				\sum_{r=1}^{p}y^{r}e_{r}\cdot e_{r}\cdot
				B(X,e_{r}) \right. \right. \right.  } \\
				\shoveright{ \left.\left. \left. +\sum_{r=1}^{p}\sum_{s=1,s\neq r}^{p}y^{s}e_{s}\cdot e_{r}\cdot B(X,e_{r})-\mathbf{i}~A^{l}(X)Y \right) \cdot \varphi_j
				\right\rangle \right\rangle } \\
				\shoveleft{=\sum_{i,j=1}^{2^k} (id-\tau )\left\langle \left\langle \varphi_i ,\frac{1}{2}\left(
				-B(X,Y)+D\right) \cdot \varphi_j \right\rangle \right\rangle .} \\
				\end{multline*}
				\begin{equation*}
					D=\sum_{r=1}^{p}\sum_{s=1,s\neq r}^{p}y^{s}e_{s}\cdot
					e_{r}\cdot B(X,e_{r})-\mathbf{i}~A^l (X)Y, \ \ \ \ 
					\tau \lbrack D] =[D].
				\end{equation*}
				Consequently
				\begin{eqnarray*}
					X(\xi (Y)) &=&\frac{1}{2}(id-\tau )\left\langle \left\langle \sum_{i=1}^{2^k} \varphi_i ,\left(
					-B(X,Y)+D\right) \cdot \sum_{j=1}^{2^k} \varphi_j \right\rangle \right\rangle \notag \\
					&=& \sum_{i=1}^{2^k} \tau \lbrack \varphi _j]\Big( -\tau \lbrack B(X,Y)\rbrack+\tau \lbrack D\rbrack \Big) \sum_{i=1}^{2^k}[\varphi_i ]\\&=&-\tau \lbrack \varphi ]\tau \lbrack B(X,Y)][\varphi ] =\left\langle
					\left\langle \varphi ,B(X,Y)\cdot \varphi \right\rangle \right\rangle =\xi (B(X,Y)).
				\end{eqnarray*}
				Therefore, we conclude that
				\begin{eqnarray*}
					B^{F}(\xi (X),\xi (Y)) &=&B^{F}(\xi (X),\xi (Y)):=\{\nabla _{\xi(X)}^{F}\xi(Y)\}^{\bot}\\
					&=&\{X(\xi (Y))\}^{\bot } 
					=\{\xi (B(X,Y))\}^{\bot }=\xi (B(X,Y)),
				\end{eqnarray*}
				here was used the fact that $F=\int \xi $ is an isometry: $B(X,Y)\in
				E\Rightarrow \xi (B(X,Y))\in TF(M)^{\bot }.$ Therefore, the statement $\mathbf{i)}$ follows.
				\\ 
				$\mathbf{ii)}$ Firstly note that
				\begin{eqnarray*}
				\nabla _{\xi (X)}^{F}\xi (\eta )&=&\left\{ X(\xi (\eta ))\right\} ^{\bot
				}=\left\{\sum_{i,j=1}^{2^k} X\left\langle \left\langle \eta \cdot \varphi_i ,\varphi_j
				\right\rangle \right\rangle \right\} ^{\bot } \notag \\
				&=& \sum_{i,j=1}^{2^k}\left\langle \left\langle \eta \cdot \nabla _{X}\varphi _i,\varphi_j
				\right\rangle \right\rangle ^{\bot }+\sum_{i,j=1}^{2^k}\left\langle \left\langle \eta \cdot
				\varphi_ ,\nabla _{X}\varphi_j \right\rangle \right\rangle ^{\bot } +\sum_{i,j=1}^{2^k}\left\langle \left\langle
				\nabla _{X}\eta \cdot \varphi_i ,\varphi_j \right\rangle \right\rangle ^{\bot
				}.
				\end{eqnarray*}
			\textbf{Statement:}
				\begin{equation*}
				\sum_{i,j=1}^{2^k}\left\langle \left\langle \eta \cdot \nabla _{X}\varphi_i ,\varphi_j
				\right\rangle \right\rangle ^{\bot }+\sum_{i,j=1}^{2^k}\left\langle \left\langle \eta \cdot
				\varphi_i ,\nabla _{X}\varphi_j \right\rangle \right\rangle ^{\bot }=0. 
				\end{equation*}
			Indeed,
				\begin{multline*}
				\sum_{i,j=1}^{2^k}\left\langle \left\langle \eta \cdot \nabla _{X}\varphi _i,\varphi_j
				\right\rangle \right\rangle +\sum_{i,j=1}^{2^k}\left\langle \left\langle \eta \cdot \varphi_i
				,\nabla _{X}\varphi_j \right\rangle \right\rangle =\sum_{i,j=1}^{2^k}(id-\tau )\left\langle \left\langle \eta \cdot \nabla _{X}\varphi_i
				,\varphi_j \right\rangle \right\rangle   \\	\shoveleft{=\sum_{i,j=1}^{2^k}(-id+\tau )\left\langle \left\langle \left[ \frac{1}{2}
				\sum _{r=1}^{p}\eta \cdot e_{r}\cdot B(X,e_{r})\cdot \varphi_i  -\frac{1}{2}\mathbf{i}~A^{l}(X)\eta \cdot \varphi_i \right] ,\varphi_j \right\rangle \right\rangle }\\	\shoveleft{=\sum_{i,j=1}^{2^k}(-id+\tau )\left\langle \left\langle \left[ -\frac{1}{2}
					\sum _{r=1}^{p}\sum _{s=1}^{q}\sum
				 _{t=1}^{q}a^{s}b_{r}^{t}e_{r}\cdot f_{s}\cdot f_{t} -\frac{1}{2}
					\mathbf{i}~A^{l}(X)\eta \right] \cdot \varphi_i ,\varphi _j \right\rangle \right\rangle }
					\\	\shoveleft{=\sum_{i,j=1}^{2^k}(-id+\tau )\left\langle \left\langle \left[ \frac{1}{2}
					\sum _{r=1}^{p}\sum _{s=1}^{q} a^{s}b_{r}^{s}e_{r} \right. \right. \right.  }
					\\ \left. \left. \left. -\frac{1}{2}\sum _{r=1}^{p}\sum _{s=1}^{q}\sum _{t=1,t\neq
					s}^{q} a^{s}b_{r}^{t}e_{r}\cdot f_{s}\cdot f_{t}-\frac{1}{2}A^{l}(X)i \eta \right]
					\cdot \varphi _i,\varphi_j \right\rangle \right\rangle ,
				\end{multline*}
			from which it follows that
				\begin{eqnarray*}
					&&\sum_{i,j=1}^{2^k}\left\langle \left\langle \eta \cdot \nabla _{X}\varphi_i ,\varphi_j \right\rangle
					\right\rangle +\sum_{i,j=1}^{2^k}\left\langle \left\langle \eta \cdot \varphi_i ,\nabla _{X}\varphi_j
					\right\rangle \right\rangle \\ && = \sum_{j=1}^{2^k}\tau \lbrack \varphi_j ]\left[ \frac{1}{2}\sum_{r=1}^{p}	\sum_{s=1}^{q}a^{s}b_{r}^{s}e_{r}\right]\sum_{i=1}^{2^k}[\varphi_i ] 
					+\sum_{j=1}^{2^k}\tau \lbrack \varphi_j ]\left[\frac{1}{2}\sum_{r=1}^{p}\sum_{s=1}^{q}a^{s}b_{r}^{s}e_{r}\right]\sum_{i=1}^{2^k}[\varphi_i ]  \\
					&&= \tau \lbrack \varphi ]\left[\frac{1}{2}\sum_{r=1}^{p}\sum_{s=1}^{q}a^{s}b_{r}^{s}e_{r}\right][\varphi ]+\tau \lbrack \varphi ]\left[\frac{1}{2}\sum_{r=1}^{p}\sum_{s=1}^{q}a^{s}b_{r}^{s}e_{r}\right][\varphi ]  \\
					&&=\tau \lbrack \varphi]\left[\frac{1}{2}\sum_{r=1}^{p}\sum_{s=1}^{q}a^{s}b_{r}^{s}e_{r}\right][\varphi ]=\tau \lbrack
					\varphi ][\nu][\varphi ]=:\xi(\nu)\in TF(M) \notag \\
					&\Rightarrow &\sum_{i,j=1}^{2^k}\left\langle \left\langle \eta \cdot \nabla _{X}\varphi_i
					,\varphi_j \right\rangle \right\rangle ^{\bot }+\sum_{i,j=1}^{2^k}\left\langle \left\langle \eta
					\cdot \varphi_i ,\nabla _{X}\varphi_j \right\rangle \right\rangle ^{\bot }=0.
				\end{eqnarray*}
				Concluding
				\begin{equation*}
				\nabla _{\xi (X)}^{F}\xi (\eta )=\sum_{i,j=1}^{2^k}\left\langle \left\langle \nabla _{X}\eta
				\cdot \varphi_i ,\varphi_j \right\rangle \right\rangle ^{\bot }=\Big(\xi (\nabla _{X}\eta )\Big)^{\bot }=\xi (\nabla
				_{X}^{\prime }\eta ). 
				\end{equation*}%
			Finally $ \mathbf{ii)} $ follows.
			\end{enumerate}
		\end{proof}

Having established this, we will have the following:

\begin{theorem} \label{principal}
	Let $M$ a riemannian $p$-dimensional manifold, $E\rightarrow M$ a real vector bundle of rank $q$, assume that $TM$ and $E$ are oriented and $Spin^{\mathbb{C}}.$ Suppose that $B:TM\times TM\rightarrow E$ is a bilinear and symmetric form . Thus the following statements are equivalent:
	
	\begin{enumerate}
		\item 
		\textbf{For the case } $n = 2k$ \textbf{ even:}
		
		There are $2^{k}$ spinors $\varphi _{i}\in \Gamma \left( \Sigma _{1}^{\mathbb{C}}\right) \simeq \Gamma \left( \Sigma _{i}^{\mathbb{C}}\right) $
		which satisfy the equations
		\begin{equation*}
		\nabla _{X}^{\Sigma _{1}^{\mathbb{C}}}\varphi _{i}=-\frac{1}{2}%
		\sum_{j=1}^{p}e_{j}\cdot B(X,e_{j})\cdot \varphi _{i}+\frac{1}{2}%
		\textbf{i}~A^{l}(X)\cdot \varphi _{i},i=1,\cdots ,2^{k}.  \label{killingiipar}
		\end{equation*}
		
		\textbf{For the case } $n = 2k+1$ \textbf{ odd:}
		
		There are $2^{k+1}$ spinors $\varphi _{i;0}\in \Gamma \left( \Sigma _{1;0}^{\mathbb{C}}\right) $, $\varphi _{i;1}\in \Gamma \left( \Sigma _{1;1}^{\mathbb{C}}\right) $
		which satisfy the equations
		\begin{equation*}
        \nabla _{X}^{\Sigma _{1;\lambda}^{\mathbb{C}}}\varphi _{i;\lambda}=-\frac{1}{2}%
		\sum_{j=1}^{p}e_{j}\cdot B(X,e_{j})\cdot \varphi _{i;\lambda}+\frac{1}{2}%
		\textbf{i}~A^{l}(X)\cdot \varphi _{i;\lambda}, \ \ i=1,\cdots ,2^{k}, \lambda=0,1. \label{killingiiimpar}
		\end{equation*}

		\item There is an isometric immersion $F:M\rightarrow \mathbb{R}%
		^{\left( n+m\right) }$ with normal bundle $E$ and second fundamental form $B$.
	\end{enumerate}
	
	Besides that, $dF= \xi $ where $\xi $ is a $\mathbb{R}^{n}$-valued $1$-form defined by
	
	\textbf{For the case } $n= 2k$ \textbf{ even:}
	\begin{eqnarray*}
	\xi (X) &=& \sum_{i,j}^{2^k} \xi_{ij} (X),  \label{formpar} \\
\xi _{ij}(X) &=&\left\langle \left\langle X\cdot \varphi _{i},\varphi
_{j}\right\rangle \right\rangle ,i,j=1,\cdots ,2^{k},~~\forall X\in TM.
	\end{eqnarray*}
	
	\textbf{For the case } $n = 2k+1$ \textbf{odd:}
	\begin{eqnarray*}
\xi (X) &=& \sum_{\lambda=0}^{1} \sum_{i,j}^{2^k}  \xi _{ij;\lambda}(X) \label{formimpar} \\
	\xi _{ij;\lambda}(X) &=&\left\langle \left\langle X\cdot \varphi _{i;\lambda},\varphi
	_{j;\lambda}\right\rangle \right\rangle ,i,j=1,\cdots ,2^{k}, \lambda=0,1,~~\forall X\in TM.
	\end{eqnarray*}
\end{theorem}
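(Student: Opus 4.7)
The plan is to assemble Theorem \ref{principal} directly from the lemmas already established, since most of the analytic and algebraic work has been carried out in sections \ref{idealspinors}--\ref{hermitianproduct1} and in the preparatory lemmas just before the theorem. The proof naturally splits into the two implications $(2)\Rightarrow(1)$ and $(1)\Rightarrow(2)$, which are handled by different batteries of lemmas.

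For $(2)\Rightarrow(1)$, I would just invoke Lemma~\ref{spinoresidealcasopar} when $n=2k$ is even, and Lemma~\ref{spinoresidealcasoimpar} when $n=2k+1$ is odd. Given an isometric immersion $F:M\hookrightarrow \mathbb{R}^n$ with normal bundle $E$ and second fundamental form $B$, these lemmas produce exactly the $2^k$ (respectively $2^{k+1}$) orthonormal spinors $\varphi_i$ (resp.\ $\varphi_{i;\lambda}$) in the relevant ideal-spinor bundles satisfying the desired Killing-type equation. So this direction is immediate.

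For the reverse implication $(1)\Rightarrow(2)$, the argument proceeds in three steps. First, from the spinors $\varphi_i$ (or $\varphi_{i;\lambda}$) define the $\mathbb{C}l_n$-valued one-form $\xi$ by the formula stated in the theorem. Second, apply Lemma~\ref{lemaxii} (even case) or Lemma~\ref{lemaxii2} (odd case) to conclude that $\xi$ is in fact $\mathbb{R}^n$-valued and closed; here the key input is that the sum of the ideal components of the $\varphi_i$ lies in $Spin_n^{\mathbb{C}}\subset \mathbb{C}l_n$, which is exactly the content of \eqref{somaestaemspin} and \eqref{somaestaemspin2}, used together with the algebraic identity $\tau[\varphi][\varphi]=1$ to collapse the double sum $\sum_{i,j}\xi_{ij}$ to $\tau[\varphi][X][\varphi]\in\mathbb{R}^n$. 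Third, invoking the hypothesis that $M$ is simply connected (implicit from the remarks preceding the theorem; otherwise one passes to the universal cover), the Poincaré lemma produces a map $F:M\to\mathbb{R}^n$ with $dF=\xi$, and Lemma~\ref{lemaimersaoo} yields that $F$ is an isometric immersion and that the bundle map $\Phi_E$ identifies $E$ isometrically with the normal bundle of $F(M)$, preserving the connection and sending $B$ to the second fundamental form of $F$.

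I expect the main obstacle, at the level of the theorem itself, to be essentially bookkeeping: verifying that the two parity cases can be handled uniformly in the statement, and making sure that the identifications $\Sigma_{1}^{\mathbb{C}}\simeq \Sigma_{i}^{\mathbb{C}}$ (or $\Sigma_{1;\lambda}^{\mathbb{C}}\simeq \Sigma_{i;\lambda}^{\mathbb{C}}$) used to regard each $\varphi_i$ as a section of the appropriate ideal bundle are compatible with the condition $[\varphi_1]+\cdots+[\varphi_{2^k}]\in Spin_n^{\mathbb{C}}$ after any change of spinorial frame. This last point is the true non-trivial ingredient: without it, $\xi$ would not take values in $\mathbb{R}^n$, and the whole geometric reconstruction collapses. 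Since the underlying analytic and differential-geometric work (Gauss formula, compatibility of $\nabla^{\Sigma^{\mathbb{C}}Q}$ with $\langle\langle\cdot,\cdot\rangle\rangle$, computation of $d\xi$ and of $B^F$ in terms of $\xi$) is already packaged in the cited lemmas, the proof of Theorem~\ref{principal} itself reduces to stringing these together and checking that both parity cases are covered.
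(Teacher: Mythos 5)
Your proposal is correct and follows exactly the same route as the paper: the paper's proof is literally a one-line citation of Lemmas \ref{spinoresidealcasopar}, \ref{spinoresidealcasoimpar}, \ref{lemaxii}, \ref{lemaxii2}, and \ref{lemaimersaoo}, and your write-up simply spells out how those lemmas combine into the two implications, including the (implicit) simply-connected hypothesis needed for the Poincar\'e lemma and the crucial frame-independence of the condition $[\varphi_1]+\cdots+[\varphi_{2^k}]\in Spin_n^{\mathbb{C}}$ that makes $\xi$ take values in $\mathbb{R}^n$.
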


\begin{proof}
	The proof immediately follows from lemmas \ref{spinoresidealcasopar}, \ref{spinoresidealcasoimpar}, \ref{lemaxii}, \ref{lemaxii2}, \ref{lemaimersaoo}.
\end{proof}

%% The Appendices part is started with the command \appendix;
%% appendix sections are then done as normal sections

\appendix

\section{Complex Clifford algebra ideal spinors of an immersed manifold} \label{apendicea}

Using the irreducible representations of complex Clifford algebras, we define the following bundles of complex spinors:

\textbf{Case }$p$ \textbf{and }$q$ \textbf{even} 
\begin{align*}
&\sum\nolimits_{i}^{\mathbb{C}}M :=P_{Spin^{\mathbb{C}}_p}\times _{\rho
_{i}^{p}}I_{i}^{p},~\sum\nolimits_{j}^{\mathbb{C}}E:=P_{Spin^{\mathbb{C}}_q}\times _{\rho _{j}^{q}}I_{i}^{q},  \\
&\sum\nolimits_{r}^{\mathbb{C}}Q :=P_{Spin^{\mathbb{C}}_n}\times _{\rho
_{r}^{n}}I_{r}^{n},~\left. \sum\nolimits_{r}^{\mathbb{C}%
}Q\right\vert _{M}:=\left. P_{Spin^{\mathbb{C}}_n}\right\vert _{M}\times
_{\rho _{r}^{n}}I_{r}^{n},  \\
&i =1,\cdots ,2^{\frac{p}{2}};j=1,\cdots ,2^{\frac{q}{2}};r=1,\cdots ,2^{\frac{n}{2}};
\end{align*}

\textbf{Case }$p$ \textbf{even, }$q$ \textbf{odd} 
\begin{align*}
&\sum\nolimits_{i}^{\mathbb{C}}M :=P_{Spin^{\mathbb{C}}_p}\times _{\rho
_{i}^{p}}I_{i}^{p},~\sum\nolimits_{j;\lambda}^{\mathbb{C}}E:=P_{Spin^{\mathbb{C}}_q}\times _{\rho _{j;\lambda}^{q}}I_{j;\lambda}^{q},   \\ & \sum\nolimits_{r;\lambda}^{\mathbb{C}}Q :=P_{Spin^{\mathbb{C}}_n}\times
_{\rho _{r;\lambda}^{n}}I_{r;\lambda}^{n}, \left. \sum\nolimits_{r;\lambda}^{\mathbb{C}}Q\right\vert _{M}:=\left. P_{Spin^{\mathbb{C}}_n}\right\vert _{M}\times
_{\rho _{r;\lambda}^{n}}I_{r;\lambda}^{n},  \\
&  ~ i =1,\cdots ,2^{\frac{p}{2}};j=1,\cdots ,2^{\frac{q-1}{2}};r=1,\cdots ,2^{%
\frac{n-1}{2}};\lambda=0,1.
\end{align*}

\textbf{Case }$p$ \textbf{odd, }$q$ \textbf{even} 
\begin{align*}
&\sum\nolimits_{i;\lambda}^{\mathbb{C}}M :=P_{Spin^{\mathbb{C}}_p}\times _{\rho
_{i;\lambda}^{p}}I_{i;\lambda}^{p},~\sum\nolimits_{j}^{\mathbb{C}}E:=P_{Spin^{\mathbb{C}}_q}\times _{\rho _{j}^{q}}I_{j}^{q},  \\
& \sum\nolimits_{r;\lambda}^{\mathbb{C}}Q :=P_{Spin^{\mathbb{C}}_n}\times
_{\rho _{r;\lambda}^{n}}I_{r;\lambda}^{n},~\left. \sum\nolimits_{r;\lambda}^{\mathbb{C}}Q\right\vert _{M}:=\left. P_{Spin^{\mathbb{C}}_n}\right\vert _{M}\times
_{\rho _{r;\lambda}^{n}}I_{r,\lambda}^{n},  \\
&i =1,\cdots ,2^{\frac{p-1}{2}};j=1,\cdots ,2^{\frac{q}{2}};r=1,\cdots ,2^{%
\frac{n-1}{2}};\lambda=0,1.
\end{align*}

\textbf{Case }$p$ \textbf{and }$q$ \textbf{even} 
\begin{align*}
&\sum\nolimits_{i;\lambda}^{\mathbb{C}}M :=P_{Spin^{\mathbb{C}}_p}\times _{\rho_{i}^{p}}I_{i;\lambda}^{p},~\sum\nolimits_{j;\lambda^{\prime }}^{\mathbb{C}}E:=P_{Spin^{\mathbb{C}}m}\times _{\rho _{j;\lambda^{\prime }}^{q}}I_{j;\lambda^{\prime }}^{q},  \\
&\sum\nolimits_{r}^{\mathbb{C}}Q :=P_{Spin^{\mathbb{C}}_n}\times _{\rho_{r}^{n}}I_{r}^{n},~\left. \sum\nolimits_{r}^{\mathbb{C}}Q\right\vert _{M}:=\left. P_{Spin^{\mathbb{C}}_n}\right\vert _{M}\times
_{\rho _{r}^{n}}I_{r}^{n},  \\
&i =1,\cdots ,2^{\frac{p-1}{2}};j=1,\cdots ,2^{\frac{q-1}{2}};r=1,\cdots
,2^{\frac{n}{2}};\lambda,l^{\prime }=0,1.
\end{align*}

From \cite[pp. 5]{bar98} we can compare the spinor modules of the Clifford algebra of a direct sum with the spinor modules associated with each factor. Besides that, using the fact that the transition functions of $\left.P_{Spin^{\mathbb{C}}_n}\right\vert_{M}$ are the product of the transition functions of $P_{Spin^{\mathbb{C}}p}$ and $P_{Spin^{\mathbb{C}}q}$ , it's not difficult to get:

\textbf{Case }$p$ \textbf{and }$q$ \textbf{even:} 
\begin{eqnarray} \label{app1}
\sum\nolimits_{i;j}^{\mathbb{C}} &:=&\sum\nolimits_{i}^{\mathbb{C}}M\otimes \sum\nolimits_{j}^{\mathbb{C}}N=\left( P_{Spin^{\mathbb{C}}_p}\times_{\rho _{i}^{p}}I_{i}^{p}\right) \otimes \left( P_{Spin^{\mathbb{C}}_q}\times _{\rho _{j}^{q}}I_{j}^{q}\right) \notag  \\
&\simeq &\left. P_{Spin^{\mathbb{C}}_n}\right\vert _{M}\times _{\rho
}\left( I_{i}^{p}\otimes I_{j}^{q}\right) 
\simeq \left. P_{Spin^{\mathbb{C}}_n}\right\vert _{M}\times _{\rho_{r}^{n}}I_{r}^{n}=:\left. \sum\nolimits_{r}^{\mathbb{C}}Q\right\vert _{M} \\
&&\forall i=1,\cdots ,2^{\frac{p}{2}};j=1,\cdots ,2^{\frac{q}{2}};r=1,\cdots
,2^{\frac{n}{2}}. \notag
\end{eqnarray}

\textbf{Case }$p$ \textbf{even, }$q$ \textbf{odd:} 
\begin{eqnarray} \label{app2}
\sum\nolimits_{i,j;\lambda}^{\mathbb{C}} &:=&\sum\nolimits_{i}^{\mathbb{C}}M\otimes \sum\nolimits_{j;\lambda}^{\mathbb{C}}N=\left( P_{Spin^{\mathbb{C}}_p}\times _{\rho _{i}^{p}}I_{i}^{p}\right) \otimes \left( P_{Spin^{\mathbb{C}}_q}\times _{\rho _{j;\lambda}^{q}}I_{j;\lambda}^{q}\right) \notag \\
&\simeq &\left. P_{Spin^{\mathbb{C}}_n}\right\vert _{M}\times _{\rho}\left( I_{i}^{p}\otimes I_{j;\lambda}^{q}\right) 
\simeq \left. P_{Spin^{\mathbb{C}}_n}\right\vert _{M}\times _{\rho
_{r;\lambda}}I_{r;\lambda}^{n}=:\left. \sum\nolimits_{r;\lambda}^{\mathbb{C}}Q\right\vert_{M}
\\
&& \forall i=1,\cdots ,2^{\frac{p}{2}};j=1,\cdots ,2^{\frac{q-1}{2}};r=1,\cdots ,2^{\frac{n-1}{2}};\lambda=0,1. \notag
\end{eqnarray}

\textbf{Case }$p$ \textbf{odd, }$q$ \textbf{even:} 
\begin{eqnarray} \label{app3}
\sum\nolimits_{i;\lambda,j}^{\mathbb{C}}&:=&\sum\nolimits_{i;\lambda}^{\mathbb{C}}M\otimes \sum\nolimits_{j}^{\mathbb{C}}N=\left( P_{Spin^{\mathbb{C}}_p}\times _{\rho _{i;\lambda}^{p}}I_{i;\lambda}^{p}\right) \otimes \left( P_{Spin^{\mathbb{C}}_q}\times _{\rho _{j}^{q}}I_{j}^{q}\right)  \notag \\
&\simeq &\left. P_{Spin^{\mathbb{C}}_n}\right\vert _{M}\times _{\rho}\left( I_{i;\lambda}^{p}\otimes I_{j}^{q}\right)  
\simeq \left. P_{Spin^{\mathbb{C}}_n}\right\vert _{M}\times _{\rho
_{r;\lambda}}I_{r;\lambda}^{n}=:\left. \sum\nolimits_{r;\lambda}^{\mathbb{C}}Q\right\vert_{M}\\ && \forall i=1,\cdots ,2^{\frac{p-1}{2}};j=1,\cdots ,2^{\frac{q}{2}};r=1,\cdots ,2^{\frac{n-1}{2}};\lambda=0,1. \notag
\end{eqnarray}

\textbf{Case }$p$ \textbf{and }$q$ \textbf{odd:} 
\begin{eqnarray} \label{app4}
\sum\nolimits_{i,j}^{\mathbb{C}} &:=&\left( \sum\nolimits_{i;0}^{\mathbb{C}}M\otimes \sum\nolimits_{j;0}^{\mathbb{C}}N\right) \oplus \left(
\sum\nolimits_{i;0}^{\mathbb{C}}M\otimes \sum\nolimits_{j;1}^{\mathbb{C}}N\right)  \notag \\
&=&\left( P_{Spin^{\mathbb{C}}_p}\times _{\rho _{i;0}^{p}}I_{i;0}^{p}\right)
\otimes \left( P_{Spin^{\mathbb{C}}_q}\times _{\rho
_{j;0}^{q}}I_{j;0}^{q}\right)  \oplus \left( P_{Spin^{\mathbb{C}}_p}\times
_{\rho _{i;0}^{p}}I_{i;0}^{p}\right) \otimes \left( P_{Spin^{\mathbb{C}}_q}\times _{\rho _{j;1}^{q}}I_{j;1}^{q}\right)  \notag \\
&\simeq &\left. P_{Spin^{\mathbb{C}}_n}\right\vert _{M}\times _{\rho
}\left( I_{i;0}^{p}\otimes I_{j;0}^{q}\right) \oplus \left(
I_{i;0}^{p}\otimes I_{j;1}^{q}\right) 
\simeq \left. P_{Spin^{\mathbb{C}}_n}\right\vert _{M}\times
_{r}I_{r}^{n}=:\left. \sum\nolimits_{r}^{\mathbb{C}}Q\right\vert _{M} \\ && \forall i=1,\cdots ,2^{\frac{p-1}{2}};j=1,\cdots ,2^{\frac{q-1}{2}};r=1,\cdots ,2^{\frac{n}{2}}. \notag
\end{eqnarray}

In eqs. \eqref{app1}, \eqref{app2}, \eqref{app3} and \eqref{app4}, $\rho$ is the representation given in \cite[pp. 5]{bar98}.

% If in two-column mode, this environment will change to single-column format so that long equations can be displayed. 
% Use only when necessary.
%\begin{widetext}
%$$\mbox{put long equation here}$$
%\end{widetext}

% Figures should be put into the text as floats. 
% Use the graphics or graphicx packages (distributed with LaTeX2e).
% See the LaTeX Graphics Companion by Michel Goosens, Sebastian Rahtz, and Frank Mittelbach for examples. 
%
% Here is an example of the general form of a figure:
% Fill in the caption in the braces of the \caption{} command. 
% Put the label that you will use with \ref{} command in the braces of the \label{} command.
%
% \begin{figure}
% \includegraphics{}%
% \caption{\label{}}%
% \end{figure}

% Tables may be be put in the text as floats.
% Here is an example of the general form of a table:
% Fill in the caption in the braces of the \caption{} command. Put the label
% that you will use with \ref{} command in the braces of the \label{} command.
% Insert the column specifiers (l, r, c, d, etc.) in the empty braces of the
% \begin{tabular}{} command.
%
% \begin{table}
% \caption{\label{} }
% \begin{tabular}{}
% \end{tabular}
% \end{table}

% If you have acknowledgments, this puts in the proper section head.
%\begin{acknowledgments}
% Put your acknowledgments here.
%\end{acknowledgments}

% Create the reference section using BibTeX:

\end{document}